\newtheorem{theorem}{Theorem}[section]
\newtheorem{corollary}[theorem]{Corollary}
\newtheorem{lemma}[theorem]{Lemma}
\newtheorem{proposition}[theorem]{Proposition}
\numberwithin{equation}{section}% reset equation counter for sections
\numberwithin{theorem}{section}
\title{Mesoscopic Rates of Convergence for Hermitian Unitary Ensembles}
\date{}
\author{Mengchun Cai%
\thanks{Department of Mathematics, Applied Mathematics, and
  Statistics, Case Western Reserve University, Cleveland, Ohio,
  U.S.A.; mengchun.cai@case.edu.}%
\qquad
Kyle Taljan%
\thanks{Department of Mathematics, Applied Mathematics, and
  Statistics, Case Western Reserve University, Cleveland, Ohio,
  U.S.A.; kyle.taljan@case.edu.}}
\begin{document}
\maketitle
\begin{abstract}
\noindent This paper provides mesoscopic rates of convergence (ROC) with respect to the $L^1$-Wasserstein distance for the eigenvalue determinantal point processes (DPPs) from the three major Hermitian unitary ensembles, the Gaussian Unitary Ensemble (GUE), the Laguerre Unitary Ensemble (LUE), and the Jacobi Unitary Ensemble (JUE) to their limiting point processes. We prove ROCs for the bulk of the GUE spectrum, the hard edge of the LUE spectrum, and the soft edges of the GUE, LUE, and JUE spectrums. These results are called mesoscopic because we are able to directly compare the point counts between the converging and limit DPPs in a range of scales. We are able to achieve these results by controlling the trace class norm of the integral operators determined by the DPP kernels.
\end{abstract}
\
\section{Introduction}

\subsection{Rates of Convergence and Mesoscopic Information}

The literature on random matrix theory (RMT) is huge and varied. Naturally, many of the most important questions revolve around the (random) spectrum of different classes of random matrices. As an example, consider a $N\times N$ Hermitian matrix $X:=\frac{1}{2}(Y+Y^*)$ where $Y$ is a $N\times N$ matrix with all entries of the i.i.d. complex Gaussian $\mathcal{N}_\mathbb{C}(0,1)$. It is well known that $\mu_N^X:=\frac{1}{N}\sum\limits_{k=1}^N\delta_{\lambda_k(X)}$, the empirical distribution of the eigenvalues of $X$, as $N$ goes to $\infty$, almost surely converges towards the semicircle law $\mu_{sc}(dx)=\frac{1}{2\pi}\sqrt{4-x^2}\chi_{[-2,2]}(x)dx$ in distribution (see Bai \cite{bai1993convergence} for more details). 

While asymptotic statements are standard, there is a major, more modern research thrust toward quantitative rate of convergence (ROC) results across probability and analysis. An archetype of this sort of work is the Lindeberg-Levy Central Limit Theorem (CLT) versus the Berry-Esseen CLT. The first says that a properly centered and scaled sum of random variables converges in distribution to a limit, whereas the second provides a quantitative rate at which the limiting distribution is achieved (interested readers are encouraged to refer to \cite{fischer_history_2011} as a comprehensive introduction). As a similar example in RMT, in multivariate statistics and quantum mechanics, as in \cite{agranov_airy_2020} and \cite{mehta_random_2004}, the Airy distribution is a limiting distribution in a number of important situations. However, concrete applications in many fields depend on the distribution for large, finite dimension $N$. Non-asymptotic results allow estimation of the error, with respect to $N$, when utilizing the Airy distribution for practical inferences. So, in RMT, as soon as a theorem like convergence to the semicircle law is proved, one can ask for the rate, as a function of $N$, of the convergence. 

Besides seeking non-asymptotic (or, perhaps better, beyond asymptotic) results, this work will also be concerned with proving results on the most refined scales possible. In our case we will have an ensemble of random $N \times N $ matrices as $N$ grows, which gives a random point process of the $N$ eigenvalues (for this paper, all eigenvalue point processes will be on $\mathbb{R}$). Many results in RMT, for example, the convergence to the semicircle law above, are macroscopic, in that they look at the entire spectrum and the continuous limit that the (properly recentered and rescaled) eigenvalues approach (we cite \cite[Chapter $2\&3$]{bai1993convergence} as a good reference). Some other results are microscopic as they focus on the deviation of one single eigenvalue around its empirical location (see \cite[Chapter $5$]{bai1993convergence} for the extreme eigenvalue and \cite{erdos_rigidity_2012}, \cite{pillai_universality_2014} for the general ones). However, here we will be concerned with mesocropic results, that is, results that focus on individual (random) eigenvalue counting functions arising from DPPs on a set with a variable size. Unlike the averaging process in \cite{dallaporta_eigenvalue_2012} and \cite{dallaporta2013eigenvaluevarianceboundscovariance}, here we are able to prove results about individual point counts from our point processes, the most detailed information possible in the setting.

The reason we are able to produce such detailed results is because the (random) spectrum of certain classes of random matrices come from a particularly amenable class of point processes known as Determinantal Point Processes (DPPs). As introduced in  \cite{log-gas}, the DPPs in RMT are a type of $\beta$-ensemble, specifically where $\beta=2$. In RMT, $\beta$-ensembles by themselves are already good settings for refined results (see \cite{dumitriu2002matrix} and  \cite{dumitriu_eigenvalues_2005}). DPPs, then, hold a position similar to $L^2$ spaces versus $L^p$ spaces more generally, and, as with the Hilbert space $L^2$, DPPs are usually the appropriate setting for proving the most precise results possible in all of RMT. We will briefly review some essential features of DPPs in the next section to show why this is the case.

%: the bulk, which looking at counts of individual eigenvalues in the middle of a spectrum, a hard edge, which looks at counts of eigenvalues at the edge of a spectrum when no eigenvalues can fall below a certain value, and a soft edge, which looks at the counts of eigenvalues at the edge of a spectrum where values have a non-zero probability of falling beyond the edge (albeit a low one).

\subsection{Determinantal Point Processes}
%In recent years, significant progress has been made in understanding the behavior of the eigenvalues of large-dimensional random matrices. 

%In this paper, we quantify the convergence rate for different classical unitary ensembles when the dimension $N$ is sufficiently large. Our work owes a lot to the operator factorization method from \cite{forrester2019finite}, \cite{johansson2005random} and \cite{el2006rate} and the proposition $4$ in \cite{meckes2015self}.

Let us briefly review the definition of DPPs.

A point process $\mathfrak{X}$ in a locally compact Polish space $\Omega$ is a random discrete subset of $\Omega$. For any $A\subset\Omega$, the integer-valued function
\begin{equation*}
\mathcal{N}_\mathfrak{X}(A):=\#\{\omega\in\Omega:\omega\in A\}
\end{equation*}
is called the counting function of $\mathfrak{X}$. In this paper, we mainly focus on point processes in $\Omega=\mathbb{R}$. Similarly, given a Borel $A\subset\mathbb{R}$ and a random Hermitian matrix $H$, $\mathcal{N}_H(A)$ is defined to be equal to the number of eigenvalues of $H$ in $A$. 
%For consistency, we use $\mathcal{X}_{C_N}$ to represent the process formed by eigenangles of $C_N$ instead of eigenvalues.

Fix a point process $\mathfrak{X}$. If they exist, the \textbf{correlation functions} (also called joint intensities) for $\mathfrak{X}$ are a sequence of locally integrable functions $\{\rho_k:\mathbb{R}^k\rightarrow\mathbb{R} \}_{k=1}^{\infty}$ satisfying the following condition: for all mutually disjoint measurable subsets $\{D_j\}_{j=1}^n$ of $\mathbb{R}$, 
\begin{equation*}
	\mathbb{E} \left[\prod_{j=1}^{n} \mathcal{N}_{\mathfrak{X}}(D_j)\right] = \int_{\prod_j D_j} \rho_n(x_1,\dots,x_n) dx_1 \cdots dx_n,
\end{equation*}   
where the integral is with respect to Lebesgue measure. Analogous to the classic moment problem from probability, under many circumstances correlation functions uniquely specify a point process (see \cite{soshnikov2000determinantal} where it is shown that this is the case for DPPs).

Furthermore, if there exists a function $K(x,y)$ such that, for almost every $x_i\in\mathbb{R}$,
\begin{equation*}
    \rho_n(x_1,\dots,x_n)=\det\left(K(x_i,x_j)_{i,j=1}^n\right)
\end{equation*}
for any $n$, then $\mathfrak{X}$ is called a \textbf{determinantal~point~process}, and $K$ is called the \textbf{kernel} of $\mathfrak{X}$. 

The kernel function ends up specifying two things. It specifies the correlation functions as just described, but it also specifies an integral operator. As will be the case here, a problem involving a DPP can usually be translated to a problem about the operator determined by the kernel function. Dealing with the integral operators tends to be easier, often because the kernels that determine the operators are typically well-studied, classical special functions, and so have many results that can be used off the shelf.

\subsection{The GUE, LUE, and JUE}
Now let us be specific about the unitary ensembles addressed in this paper. The joint distribution of the eigenvalues of the Hermitian matrix $G_N:=\frac{1}{2}(Y+Y^*)$ where $Y$ is a $N\times N$ matrix with all entries complex standard Gaussian is known as a \textbf{Gaussian~Unitary~Ensemble~(GUE)}. Let $X$ be a $(N+a)\times N$ matrix with iid complex standard Gaussian entries. We say the joint distribution of the eigenvalues of $L_{a,N}:=X^*X$ is a \textbf{Laguerre~Unitary~Ensemble~(LUE)} with parameter $a>-N$. Consider the matrix $J_{a,b,N}:=(A^*A)(A^*A+B^*B)^{-1}$ where $A$ is a $(N+a)\times N$ matrix, $B$ is a $(N+b)\times N$ matrix and all entries in $A$ and $B$ are iid complex standard Gaussian. The joint distribution of the eigenvalues of $J_{a,b,N}$ is a \textbf{Jacobi~Unitary~Ensemble~(JUE)} with parameters $a\ge 0$ and $b\ge 0$. Those parameters $a$, $b$ in the LUE and JUE can be chosen as some sequences $a(N)$ and $b(N)$ determined by $N$.

It is shown by Forrester, in Chapter $5$ of \cite{log-gas} that the GUE, LUE and JUE are all determinantal point processes with kernel:
\begin{equation}\label{kernel1}
    K_{N}(x,y)=\frac{\left(w(x)w(y)\right)^{\frac{1}{2}}}{(p_{N-1},p_{N-1})_2}\frac{p_{N}(x)p_{N-1}(y)-p_{N}(y)p_{N-1}(x)}{x-y}
\end{equation}
where $p_j$'s are the $j$th Hermite, Laguerre and Jacobi polynomials defined on different domains respectively, $w(x)$'s are corresponding weight factors and
\begin{equation*}
    (p_j,p_j)_2:=\int_{\mathbb{R}}p_j^2(x)w(x)dx.
\end{equation*}
The detailed information is listed as following.
\begin{center}
\begin{tabular}{c c c c} 
 \hline
  & $w(x)$ & $p_n(x)$ & $(p_n,p_n)_2$ \\[2ex] \hline
 $G_n$ & $e^{-x^2}$ & $2^{-n}H_n(x)$ & $\sqrt\pi2^{-n}n!$ \\[2ex] 
 \hline
 $L_{a,n}$ & $x^ae^{-x}\chi_{}(0,\infty)$ & $(-1)^nn!L_n^a(x)$ &$\Gamma(n+1)\Gamma(a+n+1)$   \\[2ex]
 \hline
 $J_{a,b,n}$ &$(1-x)^a(1+x)^b\chi_{(-1,1)}$ & $2^nn!\frac{\Gamma(a+b+n+a)}{\Gamma(a+b+2n+1)}P^{(a,b)}_n(x)$ &$2^{a+b+2n+1}\Theta(a,b,n)$  \\[2ex]
 \hline
\end{tabular}
\end{center}
where
\begin{equation}\label{kernel2}
    \begin{array}{ll}
         H_n(x)=\sum\limits_{k=0}^{\lfloor n/2\rfloor}(-1)^k2^{n-k}\left(\begin{matrix}
             n\\ 2k
         \end{matrix}\right)\frac{(2k)!}{2^kk!}x^{n-2k},\\ \\
        L_n^a(x)=\sum\limits_{k=0}^n(-1)^k\left(\begin{matrix}
             n+a\\n-k
         \end{matrix}\right)\frac{x^k}{k!},\\\\
         P_n^{(a,b)}(x)=\frac{\Gamma(a+n+1)}{n!\Gamma(a+b+n+1)}\sum\limits_{k=0}^n\left(\begin{matrix}
             n\\k
         \end{matrix}\right)\frac{\Gamma(a+b+n+k+1)}{\Gamma(a+k+1)}\left(\frac{x-1}{2}\right)^k
    \end{array}
\end{equation}
are corresponding polynomials and
\begin{equation*}
    \Theta(a,b,n):=\frac{\Gamma(n+1)\Gamma(a+n+1)\Gamma(b+n+1)\Gamma(a+b+n+1)}{\Gamma(a+b+2n+1)\Gamma(a+b+2n+2)}.
\end{equation*}
A determinantal point process $\mathfrak{X}_{Sine}$ given by the kernel:
\begin{equation}\label{sinek}
    K_{Sine}(x,y)=\frac{\sin(\pi(x-y))}{\pi(x-y)}
\end{equation}
is called a \textbf{sine~point~process}. This process is the limiting process of our ensembles around the bulk after proper centering and rescaling. In another word, those eigenvalues away from edges behave similarly to the sine point process when $N$ is large. 

At the right edge, the LUE and JUE have similar limiting behavior after proper rescaling. They all converge to the \textbf{Airy~point~process} $\mathfrak{X}_{Ai}$ given by the kernel:
\begin{equation*}
    K_{Ai}(x,y)=\frac{Ai(x)Ai'(y)-Ai(y)Ai'(x)}{x-y}
\end{equation*}
where
\begin{equation*}
    Ai(x)=\frac{1}{\pi}\int_0^\infty\cos\left(\frac{t^2}{3}+xt\right)dt
\end{equation*}
is the \textbf{Airy~function}. The right edge is also called the \textbf{soft edge} as with probability $1$, there is no eigenvalue exceeding it (see Geman \cite{geman_limit_1980}). Around $0$, the LUE has some unique behavior once  $\lim\limits_{N\rightarrow\infty }a(N)=a$, or equivalently, $a(N)$ is a fixed integer for $N$ large as $a(N)\in\mathbb{N}$ for any $N$. In this case, the left edge for the limiting spectral distribution of $L_{a,N}$ touches $0$, which is a natural bound for eigenvalues of $L_{a,N}$ as it is a positive semidefinite matrix. Under this circumstance, $0$ is called the \textbf{hard~edge}. After proper rescaling, the hard edge of the LUE converges to the \textbf{Bessel~point~process} $\mathfrak{X}_{Bes,a}$, whose kernel is
\begin{equation*}
    K_{Bes,a}(x,y)=\frac{\sqrt x J_a(\sqrt x)J'_a(\sqrt y)-\sqrt y J_a(\sqrt y)J_a'(\sqrt x)}{2(x-y)}
\end{equation*}
where
\begin{equation*}
    J_a(x)=\sum_{m=0}^\infty\frac{(-1)^m}{m!\Gamma(m+a+1)}\left(\frac{x}{2}\right)^{2m+a}
\end{equation*}
is the \textbf{Bessel~function} with parameter $a$.

By convention, we refer to points in a DPP as eigenvalues even when they are not true eigenvalues as they are in the sine, Airy and Bessel point processes.

\subsection{Qualitative Statement of Results}
Given a Polish metric space $(\Omega,d)$ and two probability measures $\mu$ and $\nu$ on $\Omega$, the $L^p$-Wasserstein distance $W_p$ between $\mu$ and $\nu$ is defined as:
\begin{equation*}
    W_p(\mu,\nu):=\\\inf_{(x,y)\in\pi(\mu,\nu)}\left(\mathbb{E}d^p(x,y)\right)^{\frac{1}{p}}
\end{equation*}
where $\pi(\mu,\nu)$ contains all couplings of $\mu$ and $\nu$ and $p\ge1$. This metric is widely used in probability theory and in particular, in optimal transport (see \cite[Chapter $6$]{villani2009optimal}). A key property is that for any $1\le p\le q$, the inequality $W_p\le W_q$ holds. Therefore, among the family of $L^p$- Wasserstein distances, $W_1$ induces the weakest topology. In this paper, the choice of $W_1$ is motivated a powerful lemma that serves as the starting point for our proofs. 

For the limiting behaviors mentioned above, our goal in this paper is to provide a ROC for different ensembles with respect to the $L^1$-Wasserstein distance $W_1$.

A brief word on terminology: in saying things like ``The LUE converges to Bessel kernel point process" we mean ``under a suitable recentering and scaling (to be specified) the DPP formed by the LUE eigenvalues converges to the DPP determined by the Bessel kernel". 

Here is a qualitative statement of our results. The GUE (eigenvalue DPP) under a bulk scaling converges to the sine kernel DPP  with rate $N^{-1}$. For the GUE, LUE  and JUE, we care about their convergence at two edges. At the soft edge, after proper scaling, all of the GUE, LUE and JUE converge to the Airy kernel point process with rate $N^{-\frac{2}{3}}$. At the hard edge, the scaled LUE converges to the Bessel point process with rate $N^{-2}$. We conjecture all of these rates to be optimal and a short discussion will be given in section $2.2$.

In each case the type of convergence will be a direct comparison of point counts between the two DPPs, which, again, will be the most precise type of information available. Qualitatively, all our results can be written in the form

\begin{equation*}
    W_{1}(\mathcal{N}_{N},\mathcal{N}_{\infty})
	\leq \frac{f(s)}{N^p},
\end{equation*}
where $\mathcal{N}_{N}$ and $\mathcal{N}_{\infty}$ are the (eigenvalue) counting functions on some specific set $A\subset\mathbb{R}$ while $\mathcal{N}_N$ is from the eigenvalue DPP and $\mathcal{N}_\infty$ from limiting DPP, $f(s)$ will be some function on $A$, and the GUE bulk will have $p=1$, the LUE hard edge will have $p=2$, and all three soft edge cases will have $p=\frac{2}{3}$. 

The four edge cases (LUE hard edge, and all three of the soft edge cases) will immediately yield the best known ROCs for the extreme eigenvalues to their known limiting distributions, something that points to the utility of the detailed results proved here.

\subsection{Outline of Proof}

Recall that for a kernel function $K(x,y)$ on some domain $D^2\subset\mathbb{R}^2$, we can define a corresponding integral operator $\mathcal{K}$ by the formula
\begin{equation*}
    \mathcal{K}(f)(x)=\int_D K(x,y)f(y)dy
\end{equation*}
for some function $f$ defined on $D$. Recall further that there is a hierarchy of classes of such (compact) operators (again analogous to $L^p$ spaces) based on the rate of decay of the singular values of the operator known as the Schatten $p$-Norms. Given a compact operator $T$, it belongs to the Schatten $p$-class if
\begin{equation*}
 \lVert T \rVert_{Sp}:= \left( \sum_{n=1}^{\infty} s_n(T)^p\right)^{\frac{1}{p}} < \infty
\end{equation*}
where $s_n(T)$'s are singular values of $T$. For convenience, for a given suitable operator $L$, we simply denote its Schatten $p$-norm as $\|L\|_p$ in this paper.

As expected, the two most important cases are $p=1$ and $p=2$. The $p=1$ case are known as trace class operators and the $p=2$ case are called Hilbert-Schmidt operators.

Soshnikov proved in \cite{soshnikov2000determinantal} that the kernel of self-adjoint, locally trace class operator defines a determinantal point process if and only if all the eigenvalues of the operator are between $0$ and $1$.

The motivation and method for this paper come from the joint work of E. Meckes and M. Meckes in \cite{meckes2015self}. Their proof of the proposition $4$ can be generalized to the following useful lemma:

% In addition, if $K$ is the kernel function of some determinantal point process defined above, then it is known that the corresponding operator is a locally trace class operator:
% \begin{equation*}
%     \|\mathcal{K}\|_{1,U}=\sum_{s_j(\mathcal{K})\in U}s_j(\mathcal{K})<\infty
% \end{equation*}
% where $U$ is a bounded Borel set on $\mathbb{R}$ and $s_j(\mathcal{K})$'s are the singular values of $\mathcal{K}$. See (\cite{soshnikov2000determinantal}) for more details. Moreover, define the \textbf{Hilbert-Schmidt~norm} $\|\mathcal{K}\|_{2,U}$ as:
% \begin{equation}\label{2 to L^2}
%     \|\mathcal{K}\|_{2,U}:=\left(\sum_{s_j(\mathcal{K})\in U}s_j^2(\mathcal{K})\right)^{\frac{1}{2}}=\left(\int_U\int_U K^2(x,y)dydx\right)^{\frac{1}{2}}=\|K\|_{L^2(U^2)},
% \end{equation}
% we have the \textbf{Cauchy-Schwarz~inequality}:
% \begin{equation}\label{CS in}
% \|\mathcal{K}_1\mathcal{K}_2\|_1\le\|\mathcal{K}_1\|_2\|\mathcal{K}_2\|_2=\|K_1\|_{L^2}\|K_2\|_{L^2}
% \end{equation}
% where $\mathcal{K}_1$ and $\mathcal{K}_2$ are trace class with kernel $K_1$ and $K_2$ respectively.

% The original motivation for this work comes from the following lemma, which shows how to  bound the $W_1$ distance between determinantal point processes defined above, we only need to control the trace class norm between associated integral operators.  
\begin{lemma}\label{M and M}
	Consider two DPPs $\mathfrak{X}$ and $\widetilde{\mathfrak{X}}$ with Hermitian kernels $K(x,y)$ and $\widetilde{K}(x,y)$ and associated integral operators $\mathcal{K}$ and $\widetilde{\mathcal{K}}$. Assume the integral operators are trace class. Then,
	\begin{equation*}
	 W_1(\mathcal{N}_\mathcal{X},\mathcal{N}_{\tilde{\mathcal{X}}})
	\leq \lVert \mathcal{K} - \widetilde{\mathcal{K}} \rVert_1.
	\end{equation*}
\end{lemma}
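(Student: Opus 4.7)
The plan is to leverage the Bernoulli sum representation of DPP counting functions alluded to just before the lemma (Theorem 4.1 of \cite{meckes_random_2019}). Under the trace class assumption, if $\{\lambda_i\}$ and $\{\widetilde{\lambda}_i\}$ are the (nonzero) eigenvalues of $\mathcal{K}$ and $\widetilde{\mathcal{K}}$, then $\mathcal{N}_{\mathfrak{X}} \stackrel{d}{=} \sum_i \xi_i$ and $\mathcal{N}_{\widetilde{\mathfrak{X}}} \stackrel{d}{=} \sum_i \widetilde{\xi}_i$ with $\xi_i \sim \mathrm{Ber}(\lambda_i)$ and $\widetilde{\xi}_i \sim \mathrm{Ber}(\widetilde{\lambda}_i)$ independent within each sum. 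After extending the shorter eigenvalue list by zeros and matching the two sequences in a common (say decreasing) order, the problem reduces to bounding Wasserstein-$1$ between two such Bernoulli sums by the $\ell^1$ distance of their parameter sequences, and then bounding that $\ell^1$ distance by $\lVert \mathcal{K} - \widetilde{\mathcal{K}} \rVert_1$.

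First I would dispatch the easy inequality $d_{TV} \leq W_1$. Since both counting functions are integer-valued, any coupling $(X,Y)$ satisfies $\mathbbm{1}[X \neq Y] \leq |X - Y|$; taking expectations and then the infimum over couplings gives the bound immediately.

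For the main inequality, I would construct an explicit coupling using common uniforms: let $U_i \sim \mathrm{Uniform}[0,1]$ be independent and set $\xi_i := \mathbbm{1}[U_i \leq \lambda_i]$ and $\widetilde{\xi}_i := \mathbbm{1}[U_i \leq \widetilde{\lambda}_i]$. Then $|\xi_i - \widetilde{\xi}_i|$ is Bernoulli with parameter $|\lambda_i - \widetilde{\lambda}_i|$, and the triangle inequality inside an expectation gives
\begin{equation*}
W_1(\mathcal{N}_{\mathfrak{X}}, \mathcal{N}_{\widetilde{\mathfrak{X}}}) \leq \mathbb{E}\bigl| \textstyle\sum_i \xi_i - \sum_i \widetilde{\xi}_i \bigr| \leq \sum_i |\lambda_i - \widetilde{\lambda}_i|.
\end{equation*}

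Finally, to convert the eigenvalue $\ell^1$ bound into a trace class norm bound, I would invoke the classical Lidskii--Weyl inequality for self-adjoint trace class operators, which states that for any such $A$, $B$ with decreasingly ordered eigenvalues one has $\sum_i |\lambda_i(A) - \lambda_i(B)| \leq \lVert A - B \rVert_1$; the Hermitian assumption on the kernels is exactly what makes this applicable. Chaining the three bounds produces the lemma. The most delicate point is really the bookkeeping: one must match eigenvalues in the same order both for the Bernoulli coupling and for Lidskii, appending zeros when the two operators have different numbers of nonzero eigenvalues. The Lidskii inequality itself is the deepest ingredient, but it is classical and can be cited off the shelf.
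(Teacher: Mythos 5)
Your proposal is correct and follows essentially the same route as the paper: a Bernoulli-sum representation plus coupling to bound $W_1$ by the $\ell^1$ distance between ordered eigenvalue sequences, followed by a Lidskii/Mirsky-type inequality to bound that $\ell^1$ distance by $\lVert \mathcal{K}-\widetilde{\mathcal{K}}\rVert_1$. The paper cites the coupling step from Meckes--Meckes and uses a singular-value form of the final inequality (equivalent here, since the DPP kernel operators are positive so eigenvalues equal singular values), whereas you rederive the coupling explicitly and invoke the eigenvalue form directly; the substance is identical.
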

\begin{proof}
    We only need to consider the second inequality. Let $\{\lambda_j(\mathcal{K})\}$ and $\{\lambda_j(\tilde{\mathcal{K}})\}$ be the eigenvalues of $\mathcal{K}$ and $\tilde{\mathcal{K}}$ respectively. As both $\mathcal{X}$ and $\tilde{\mathcal{X}}$ are DPPs, we know all eigenvalues are between $0$ and $1$ and hence, $\lambda_j(\mathcal{K})=s_j(\mathcal{K})$ and $\lambda_j(\tilde{\mathcal{K}})=s_j(\tilde{\mathcal{K}})$ for any $j$. Without the loss of generality, we assume all eigenvalues are listed nonincreasingly.
    
    Let $\{B_j\}_{j\in\mathbb{N}}$ and $\{\tilde{B}_j\}_{j\in\mathbb{N}}$ be two sequences of independent Bernoulli random variables such that 
    \begin{equation*}
        \mathbb{P}\{B_j=1\}=\lambda_j(\mathcal{K}),~~\mathbb{P}\{\tilde{B}_j=1\}=\tilde{\lambda}_j(\mathcal{K}).
    \end{equation*}
     According to \cite[Theorem $7$]{hough2006determinantal}, we have $\mathcal{N}_\mathcal{X}\overset{d}{=}\sum_{j=1}^\infty B_j$ and $\mathcal{N}_{\tilde{\mathcal{X}}}\overset{d}{=}\sum_{j=1}^\infty\tilde{B}_j$ (we can set all eigenvalues $0$ behind some $N\in\mathbb{N}$ if the DPP is finite). Let $\{Y_n\}_{n\in\mathbb{N}}$ be a sequence of iid random variables uniformly distributed on $[0,1]$. For each $j\in\mathbb{N}$, we further define $\xi_j:=\chi_{\{Y_j\le\lambda_j(\mathcal{K})\}}$ and $\tilde{\xi}_j:=\chi_{\{Y_j\le\lambda_j(\tilde{\mathcal{K}})\}}$, then $\left(\sum_j\xi_j,\sum_j\tilde{\xi}_j\right)$ is a coupling of $\mathcal{N}_\mathcal{X}$ and $\mathcal{N}_{\tilde{\mathcal{X}}}$. By the definition of $W_1$, we see
    \begin{equation*}
    \begin{split}
W_1(\mathcal{N}_{\mathcal{X}},\mathcal{N}_{\tilde{\mathcal{X}}})&\le\mathbb{E}\left|\sum_{j=1}^\infty\xi_j-\tilde{\xi}_j\right|\le\sum_{j=1}^\infty\mathbb{E}|\xi_j-\tilde{\xi}_j|=\sum_{j=1}^\infty\mathbb{E}\chi_{\{Y_j\in|\lambda_j(\mathbf{K})-\lambda_j(\tilde{\mathbf{K}})|\}}\\
        &=\sum_{j=1}^\infty|\lambda_j(\mathcal{K})-\lambda_j(\tilde{\mathcal{K}})|=\sum_{j=1}^\infty|s_j(\mathcal{K})-s_j(\tilde{\mathcal{K}})|
    \end{split}
    \end{equation*}
Applying the generalized Mirsky inequality (\cite[Theorem $5.4$]{markus1964eigen}) to the rightmost side above, we further get
\begin{align*}
    &\sum_{j=1}^\infty|s_j(\mathcal{K})-s_j(\tilde{\mathcal{K}})|\le\sum_{j=1}^\infty|s_j(\mathcal{K}-\tilde{\mathcal{K}})|=\|\mathcal{K}-\tilde{\mathcal{K}}\|_1.&\qedhere
\end{align*}
\end{proof}

In every case our method follows the same pattern. We start with the above lemma translating our problem to one of estimating a trace class norm. In the three soft edge cases, the trace class estimates we want are already available in  \cite{el2006rate}, \cite{johnstone2008multivariate}, and \cite{johnstone2012fast}. For the GUE bulk and the LUE hard edge, we prove the required estimates below.

\section{Statement of Results}\label{Sec: Results}
\subsection{Exact Statements}
Restricting all counting functions to some set $A\subset\mathbb{R}$,
the following results will be shown in this paper.
\begin{theorem}\label{G bulk}
    (GUE bulk) Let $I=[-s,s]$ for $s>0$ be an interval and $\mathcal{X}_{G_N}^{Bulk}$ be the bulk-scaled eigenvalue process of a GUE $G_N$ with kernel $K_{G_N}^{Bulk}=\frac{1}{\sqrt{2N}}K_{G_N}\left(\frac{x}{\sqrt{2N}},\frac{y}{\sqrt{2N}}\right)$. On any $A\subset I$, assume $N\geq 2$,
	then, there exists a uniform constant $C$ such that
	\begin{equation}\label{main bulk}
    W_{1}(\mathcal{N}_{G_N}^{Bulk},\mathcal{N}_{Sine})
	\leq \frac{C\max{\{s^{10},1\}}}{N}.
	\end{equation}
\end{theorem}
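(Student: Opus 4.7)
By Lemma~\ref{M and M} it suffices to show $\lVert \mathcal{K}_{G_N}^{Bulk}-\mathcal{K}_{Sine}\rVert_1\leq Cs^{10}/N$ as an estimate on $L^2(A)$. Since restricting an operator to a sub-interval can only decrease its trace-class norm, the plan is to work on $L^2(I)$ throughout; the resulting bound then automatically applies to any $A\subset I$.

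The core of the argument is to exploit the Christoffel--Darboux representation of both kernels. The sine kernel is already of this form,
\begin{equation*}
K_{Sine}(x,y)=\frac{1}{\pi}\cdot\frac{\sin(\pi x)\cos(\pi y)-\cos(\pi x)\sin(\pi y)}{x-y},
\end{equation*}
and the bulk-scaled GUE kernel has an analogous expression in terms of the orthonormal Hermite functions $\varphi_N,\varphi_{N-1}$ evaluated at $x/\sqrt{2N}$. The Plancherel--Rotach asymptotic in the bulk supplies smooth expansions
\begin{equation*}
\sqrt{\alpha_N}\,\varphi_{N-\epsilon}\!\bigl(\tfrac{x}{\sqrt{2N}}\bigr)=\tfrac{1}{\sqrt{\pi}}\,\mathrm{trig}_{N,\epsilon}(\pi x)+\tfrac{1}{N}R_N^{(\epsilon)}(x)\qquad(\epsilon\in\{0,1\}),
\end{equation*}
where the main terms combine to match the sine-kernel numerator and the remainders $R_N^{(\epsilon)}$, together with several derivatives, are polynomially bounded on $[-s,s]$. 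Substituting and collecting terms expresses the difference as
\begin{equation*}
K_{G_N}^{Bulk}(x,y)-K_{Sine}(x,y)=\frac{1}{N}\sum_{j=1}^M \frac{F_j(x)G_j(y)-G_j(x)F_j(y)}{x-y}+O\!\bigl(N^{-2}\bigr),
\end{equation*}
with each $F_j,G_j$ smooth and polynomially controlled in $s$ along with its first derivative.

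To convert each divided-difference summand into a trace-class estimate, I would use the identity
\begin{equation*}
\frac{F(x)G(y)-G(x)F(y)}{x-y}=\int_0^1\bigl[F'(y+t(x-y))G(y)-G'(y+t(x-y))F(y)\bigr]\,dt,
\end{equation*}
which presents the operator as a continuous average of rank-one kernels. For a rank-one kernel $u(x)v(y)$ the trace-class norm on $L^2(I)$ equals $\lVert u\rVert_{L^2(I)}\lVert v\rVert_{L^2(I)}$, so the triangle inequality inside the $t$-integral reduces everything to a finite number of $L^2([-s,s])$ norms of the $F_j,G_j$ and their first derivatives. Tracking the accumulating factors of $s$---from the $L^2(I)$ norms (each contributing $\sqrt{s}$), from the Taylor expansions of the Plancherel--Rotach remainders in powers of $x/\sqrt{2N}$, and from the trigonometric main terms---produces the exponent $10$. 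The main obstacle is the Plancherel--Rotach bookkeeping: one must make the asymptotic expansion fully quantitative and uniform on $[-s,s]$, with enough smoothness that the divided-difference factorization can be applied, while carefully accounting for every polynomial factor of $s$ that appears; once this is in place, the reduction to rank-one kernels and the final summation are essentially mechanical.
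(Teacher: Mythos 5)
Your overall strategy (Lemma \ref{M and M}, Christoffel--Darboux structure, quantitative Hermite asymptotics) matches the paper's starting point, but the step that converts the kernel difference into a trace-class bound does not work as written. The integrand in your divided-difference identity is $F'(tx+(1-t)y)G(y)$, which for $t\in(0,1)$ depends on $x$ and $y$ jointly through $F'$ and is therefore \emph{not} a rank-one kernel $u(x)v(y)$; the formula ``trace norm of a rank-one kernel equals $\lVert u\rVert_{L^2}\lVert v\rVert_{L^2}$'' does not apply, and an integral over $t$ of such non-separable kernels gives no trace-norm control. Relatedly, your final $O(N^{-2})$ remainder is only a pointwise bound on a kernel, and a pointwise (or even $L^2$) bound on a kernel controls the Hilbert--Schmidt norm but not the trace norm: a kernel of uniform size $\varepsilon$ on $[-s,s]^2$ can have arbitrarily large trace norm. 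So both the main term and the remainder in your decomposition are left without a valid trace-class estimate.

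The paper avoids this by never estimating a trace norm directly from pointwise kernel information. Exploiting the parity of the Hermite functions, it writes both $K_{Sine}$ and $K_{G_N}^{Bulk}$ in the form $\frac{1}{2}\int_{-1}^{1}\frac{d}{dt}N(tx,ty)\,dt$, which by Lemma \ref{FA} factors each operator as a finite sum of \emph{products} $\mathcal{M}\mathcal{N}$ of explicit Hilbert--Schmidt operators with kernels of the form $m(xy)$ and $n(xy)$ (Proposition \ref{GUE fac}). The operator Cauchy--Schwarz inequality $\lVert\mathcal{K}_1\mathcal{K}_2\rVert_1\le\lVert\mathcal{K}_1\rVert_2\lVert\mathcal{K}_2\rVert_2$ together with Lemma \ref{21in} then reduces everything to $L^2([-s,s]^2)$ norms of kernel differences, and those \emph{are} controlled by the pointwise Hermite-to-cosine estimate of Proposition \ref{H2C}, producing the $s^{10}$ after the bookkeeping you allude to. To salvage your route you would need to replace the divided-difference identity by a factorization of this product type (or otherwise exhibit the error operator as a sum of products of two operators each with quantitatively small Hilbert--Schmidt norm); the Plancherel--Rotach expansion alone will not close the argument.
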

\begin{theorem}\label{L hard}
    (LUE hard edge) For a fixed non-negative $a\in \mathbb{Z}$, define $\tau_N:=1-\frac{a}{2N}$. Let $I=[0,s]$ for $s>0$ be an interval and $\mathcal{X}_{L_{a,N}}^{Hard}$ be the hard edge-scaled eigenvalue process of an LUE $L_{a,N}$ with the kernel $K_{L_{a,N}}^{Hard}(x,y):=\frac{\tau_N}{4N} K_{L_{a,N}}\left(\frac{\tau_Nx}{4N},\frac{\tau_Ny}{4N}\right)$. On $A\subset I$, when $N$ is large enough, we have
    \begin{equation}\label{main L hard}
        W_1(\mathcal{N}_{L_{a,N}}^{Hard},\mathcal{N}_{Bes,a})\le\frac{h(s)}{N^2}
    \end{equation}
where $h(s)$ is a non-negative function bounded by $s^{a+1}$ when $s$ is small.
\end{theorem}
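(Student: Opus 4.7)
The starting point is Lemma \ref{M and M}, which immediately reduces the theorem to bounding the trace class norm
\begin{equation*}
\|\mathcal{K}_{L_{a,N}}^{Hard} - \mathcal{K}_{Bes,a}\|_1
\end{equation*}
of the two integral operators on $L^2(A)$. Since $A\subset [0,s]$, it is enough to work on $L^2([0,s])$ and invoke monotonicity of the trace class norm under compression to pass back to $A$. So the whole problem is reduced to a quantitative operator-theoretic estimate on a single bounded interval.

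The main technical input will be a uniform asymptotic expansion of the hard-edge-rescaled Laguerre functions in terms of Bessel functions with an \emph{explicit} $O(N^{-2})$ remainder. Concretely, Mehler-Heine-type expansions should give, uniformly for $x\in[0,s]$,
\begin{equation*}
e^{-\frac{\tau_N x}{8N}}\left(\frac{\tau_N x}{4N}\right)^{a/2} L_{N}^{(a)}\!\left(\frac{\tau_N x}{4N}\right) = J_a(\sqrt{x}) + \frac{R_{N}(x)}{N^{2}},
\end{equation*}
with a controlled remainder $R_N$, together with an analogous expansion for $L_{N-1}^{(a)}$ and for the relevant derivatives. Plugging these into the Christoffel-Darboux form of $K_{L_{a,N}}^{Hard}(x,y)$ and comparing against the Hankel-type identity
\begin{equation*}
K_{Bes,a}(x,y) = \frac{1}{4}\int_0^1 J_a(\sqrt{xt})\,J_a(\sqrt{yt})\,dt
\end{equation*}
(which exhibits $\mathcal{K}_{Bes,a}$ as an integral of rank-one positive operators) yields a representation of the difference $K_{L_{a,N}}^{Hard}(x,y)-K_{Bes,a}(x,y)$ as a finite sum of pieces of the form $\alpha_N^{(i)}(x)\,\beta_N^{(i)}(y)$, each carrying a factor of $N^{-2}$.

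From this rank-one decomposition the trace class norm is controlled by $\sum_i \|\alpha_N^{(i)}\|_{L^2([0,s])}\|\beta_N^{(i)}\|_{L^2([0,s])}$, and the rate $N^{-2}$ is inherited from the Mehler-Heine remainders. The $s$-dependence $f(s)\lesssim s^{a+2}$ for small $s$ tracks through the local behavior $J_a(\sqrt{x})\asymp x^{a/2}$ near zero: each $L^2([0,s])$ factor built from Bessel or Bessel-like functions contributes $s^{(a+1)/2}$, a product of two Hilbert-Schmidt factors gives $s^{a+1}$, and an additional power of $s$ accumulates either from the integration variable $t\in[0,1]$ in the Hankel representation or from the length of the compressed domain.

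The main obstacle will be producing the Mehler-Heine expansion at the $N^{-2}$ level with explicit and $s$-uniform constants, and then verifying that the expansions of $L_N^{(a)}$, $L_{N-1}^{(a)}$, and their derivatives mesh correctly inside the Christoffel-Darboux numerator so that cancellation of the leading Bessel terms genuinely leaves a remainder of size $N^{-2}$ rather than $N^{-1}$. Once those uniform expansions and cancellations are in place, the bookkeeping for the rank-one pieces and the $s^{a+2}$ factor should be routine, and the theorem follows by combining with Lemma \ref{M and M}.
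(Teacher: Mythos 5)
Your overall strategy (reduce to a trace-norm bound via Lemma \ref{M and M}, then use Bessel asymptotics of the hard-edge-scaled Laguerre functions and a cancellation to reach $N^{-2}$) is the right one, but there are two genuine gaps. First, the expansion you posit, $e^{-\tau_N x/(8N)}(\tau_N x/(4N))^{a/2}L_N^{(a)}(\tau_N x/(4N)) = J_a(\sqrt{x}) + O(N^{-2})$ for a \emph{single} Laguerre function, is false: Forrester's expansion (Proposition \ref{exL}) contains a genuine $\frac{a+1}{4N}\sqrt{x}\,J_{a-1}(\sqrt{x})$ term at order $N^{-1}$, and the $\tau_N$ recentering does not eliminate it for an individual function. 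After the shift one still finds that the two factor kernels satisfy $\sqrt{8}f_{\tau_N} = J_a(\sqrt{xy}) + \frac{1}{4N}\sqrt{xy}\,J_{a-1}(\sqrt{xy}) + O(N^{-2})$ and $\sqrt{8}g_{\tau_N} = \frac{1}{2N}\sqrt{xy}\,J_{a-1}(\sqrt{xy}) + O(N^{-2})$; the $N^{-1}$ terms cancel only in the specific combination $2f_{\tau_N}-g_{\tau_N}$, i.e.\ in the \emph{sum} of the two factor kernels, while their difference remains of size $N^{-1}$. You flag the required cancellation as the main obstacle, but your framing (each individual function already accurate to $N^{-2}$) is not what actually happens and would make the theorem nearly trivial.

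Second, and more structurally, the passage from the Christoffel--Darboux form to ``a finite sum of rank-one pieces each carrying $N^{-2}$'' does not go through: the $\frac{1}{x-y}$ denominator prevents the difference of the two kernels from being finite rank, so the bound $\|\cdot\|_1\le\sum_i\|\alpha_i\|_2\|\beta_i\|_2$ has nothing to apply to. The paper's mechanism is instead to use the \emph{integral} representations of both kernels (Proposition $5.4.2$ of Forrester for the Laguerre kernel, and the Hankel identity you quote for the Bessel kernel) to factor the operators as $\mathcal{K}^{Hard}_{L_{a,N}}=\mathcal{H}_N\mathcal{M}_N+\mathcal{M}_N\mathcal{H}_N$ and $\mathcal{K}_{Bes,a}=\mathcal{B}_a\mathcal{B}_a$ with explicit Hilbert--Schmidt factors, and then to apply the algebraic inequality of Lemma \ref{21in},
\begin{equation*}
2\|\mathcal{H}_N\mathcal{M}_N+\mathcal{M}_N\mathcal{H}_N-\mathcal{B}_a\mathcal{B}_a\|_1
\le\|\mathcal{H}_N+\mathcal{M}_N-\sqrt2\mathcal{B}_a\|_2\,\|\mathcal{H}_N+\mathcal{M}_N+\sqrt2\mathcal{B}_a\|_2+\|\mathcal{H}_N-\mathcal{M}_N\|_2^2 .
\end{equation*}
This is exactly where the rate comes from: the sum term is $O(N^{-2})$ by the cancellation above, and the difference term, though only $O(N^{-1})$, enters \emph{squared} and hence also contributes $O(N^{-2})$. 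Without this (or an equivalent) operator-level device your argument cannot close; once it is in place, the $s^{a+2}$ bookkeeping via $J_a(t)\lesssim t^a$ for small $t$ proceeds essentially as you describe.
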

 Theorem \ref{L hard} also provides us with a bound for the distribution of the least eigenvalue $\lambda_{\min}$ of an LUE as following.    
\begin{corollary}\label{smallest}
    Let $\lambda_{min}^N$ denote the hard-edge-scaled least eigenvalue of the LUE $L_{a,N}$ and $B_a(s)$ be the distribution function of the least eigenvalue of the Bessel point process with parameter $a$. For large $N$, we have
    \begin{equation*}
        |\mathbb{P}\{\lambda_{min}^N\le s\}-B_a(s)|\le \frac{h(s)}{N^{2}}.
    \end{equation*}
    where $h$ is the same function as in Theorem \ref{L hard}.
\end{corollary}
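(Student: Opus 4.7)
The plan is to translate the event about the smallest eigenvalue into an event about the counting function on $[0,s]$ and then invoke the $d_{TV} \le W_1$ half of Lemma \ref{M and M} together with Theorem \ref{L hard}. The key observation is that, for any simple point process on $[0,\infty)$, the smallest point lies in $[0,s]$ if and only if the counting function on $[0,s]$ is at least $1$. Applying this identity to both the hard-edge-scaled LUE process and the Bessel point process yields
\[
\mathbb{P}\{\lambda_{min}^N \le s\} = \mathbb{P}\{\mathcal{N}_{L_{a,N}}^{Hard} \ge 1\}, \qquad B_a(s) = \mathbb{P}\{\mathcal{N}_{Bes,a} \ge 1\},
\]
where in both expressions the counting functions are those restricted to $A = [0,s]$.

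Next, since total variation distance between the laws of two $\mathbb{N}$-valued random variables controls the difference in probabilities of every subset of $\mathbb{N}$ (in particular the set $\{n \ge 1\}$), I would subtract and take absolute values to obtain
\[
|\mathbb{P}\{\lambda_{min}^N \le s\} - B_a(s)| \le d_{TV}(\mathcal{N}_{L_{a,N}}^{Hard}, \mathcal{N}_{Bes,a}).
\]
The first inequality of Lemma \ref{M and M} then upgrades the right-hand side to the $W_1$-distance, and Theorem \ref{L hard} applied with $A = I = [0,s]$ bounds that by $f(s)/N^2$, finishing the argument.

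The corollary is essentially a routine unpacking of Theorem \ref{L hard}, so there is no serious obstacle; the point is really that a $W_1$-bound between counting functions is strictly stronger than a bound on any single distributional quantity such as $B_a(s)$. The only minor checks are that the hypothesis $A \subset I$ of the theorem permits the equality $A = I$ needed here, and that the ``$N$ large enough'' regime from the theorem is the same one inherited by the corollary. Both are immediate.
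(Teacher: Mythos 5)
Your argument is correct and is essentially the same as the paper's: both translate the smallest-eigenvalue event into a statement about the counting function on $[0,s]$ (you use $\{\mathcal{N}\ge 1\}$, the paper uses the complementary $1-\mathbb{P}\{\mathcal{N}=0\}$, which is the same thing) and then chain $d_{TV}\le W_1$ from Lemma \ref{M and M} with the bound of Theorem \ref{L hard}. No issues.
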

\begin{proof} By the facts that
\begin{equation*}
    \mathbb{P}\{\lambda_{min}^N\le s\}=1-\mathbb{P}\{\lambda_{min}^N>s\}=1-\mathbb{P}\{\mathcal{N}_{L_{a,N}}^{Hard}([0,s])=0\}
\end{equation*}
and
\begin{equation*}
    B_a(s)=1-\mathbb{P}\{\mathcal{N}_{Bes,a}([0,s])=0\},
\end{equation*}
from (\ref{main L hard}), we can see
\begin{equation*}
\begin{split}
     |\mathbb{P}\{\lambda_{min}^N\le s\}-B_a(s)|&=|\mathbb{P}\{\mathcal{N}_{Bes,a}([0,s])=0\}-\mathbb{P}\{\mathcal{N}_{L_{a,N}}^{Hard}([0,s])=0\}|
     \\&\le d_{TV}(\mathcal{N}_{L_{a,N}}^{Hard}([0,s]),\mathcal{N}_{Bes,a}([0,s]))
     \\&\le W_1(\mathcal{N}_{L_{a,N}}^{Hard}([0,s]),\mathcal{N}_{Bes,a}([0,s]))\\&\le\frac{h(s)}{N^2}
\end{split}
\end{equation*}
when $N$ is large.
\end{proof}

The distribution $B_a(s)$ above is also called a {\bf hard-edge~Tracy-Widom~distribution} with parameter $a$; see \cite{tracy1994level} for the explicit expression given by Tracy and Widom. In particular, when $a=0$, we have $B_0(s)=e^{-s}$ and hence, the limiting distribution of the scaled least eigenvalue of the LUE is the exponential distribution, which was first given by Edelman in \cite{edelman_eigenvalues_1988}.

\begin{theorem}\label{G soft}
    (GUE soft edge) Let $I\subset[s,+\infty)$ for $s>-\infty$. Define
    \begin{equation*}
        t_{G_N}(x):=\frac{1}{2}\left(\sqrt{2N+1}+\sqrt{2N-1}\right)+2^{-\frac{1}{2}}N^{-\frac{1}{6}}x:=\mu_{G_N}+\sigma_{G_N}x
    \end{equation*}
for any $x\in I$. Let $\mathcal{X}_{G_{N}}^{Soft}$ denote the soft edge-scaled point process for GUE $G_N$ with the kernel $K_{G_N}^{Soft}(x,y):=\sigma_{G_N}K_{G_N}(t_{G_N}(x),t_{G_N}(y))$. Then on $A\subset I$,
\begin{equation}\label{main G soft}
    W_1(\mathcal{N}_{G_N}^{Soft},\mathcal{N}_{Ai})\le\frac{C(s)}{N^{\frac{2}{3}}}
\end{equation}
for $C(s)$ a non-increasing function of $s$.
\end{theorem}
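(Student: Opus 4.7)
The plan is to apply Lemma~\ref{M and M} exactly as in the preceding theorems, reducing the bound on $W_1(\mathcal{N}_{G_N}^{Soft},\mathcal{N}_{Ai})$ to an estimate of the trace class norm $\|\mathcal{K}_{G_N}^{Soft}-\mathcal{K}_{Ai}\|_1$ for the two integral operators viewed on $L^2(A)$. As flagged in the outline of proof, a trace class bound of the correct order $N^{-2/3}$ at the GUE soft edge is already established in the RMT literature, so what is left is to invoke it and then trace through how the resulting constant depends on the cutoff $s$.

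To recap the mechanism behind such a trace norm estimate, the two ingredients are (i) precise Plancherel--Rotach asymptotics for the orthonormal Hermite functions $\phi_j$ near the spectral edge $\mu_{G_N}$, giving
\[
\sigma_{G_N}^{1/2}\phi_N\!\bigl(\mu_{G_N}+\sigma_{G_N}x\bigr)=Ai(x)+O\!\bigl(N^{-2/3}\bigr),
\]
together with the analogous expansion for $\phi_{N-1}$ and uniform derivative control on compact subsets of $[s,\infty)$, and (ii) the factorization
\[
K_{Ai}(x,y)=\int_0^\infty Ai(x+u)\,Ai(y+u)\,du,
\]
which presents $\mathcal{K}_{Ai}$ as $\mathcal{A}\mathcal{A}^*$ for the Hilbert--Schmidt operator $\mathcal{A}:L^2(\mathbb{R}_+)\to L^2(A)$ with kernel $Ai(x+u)$, plus an analogous finite-$N$ factorization $\mathcal{K}_{G_N}^{Soft}=\mathcal{A}_N\mathcal{A}_N^*$ coming from the sum-of-squares form of the Hermite kernel. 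Using the identity
\[
\mathcal{K}_{G_N}^{Soft}-\mathcal{K}_{Ai}=(\mathcal{A}_N-\mathcal{A})\mathcal{A}_N^*+\mathcal{A}(\mathcal{A}_N^*-\mathcal{A}^*)
\]
and $\|XY\|_1\le\|X\|_2\|Y\|_2$, one reduces the trace norm to Hilbert--Schmidt estimates on $\mathcal{A}_N-\mathcal{A}$ and boundedness of $\|\mathcal{A}_N\|_2,\|\mathcal{A}\|_2$, all of which are $L^2$ integrals of the pointwise asymptotic estimate above.

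The main obstacle is not the asymptotic analysis itself but uniform control of the tail of integration on the unbounded set $A\subset[s,+\infty)$ and the extraction of a clean $s$-dependence in the resulting constant $C(s)$. The super-exponential decay $Ai(x)\sim \tfrac{1}{2\sqrt{\pi}}x^{-1/4}\exp\!\bigl(-\tfrac{2}{3}x^{3/2}\bigr)$ as $x\to+\infty$, together with the Gaussian-type decay of the scaled Hermite functions outside the support of the semicircle law, makes all relevant integrals finite; pushing $s$ upward removes part of the tail and is precisely why the final constant $C(s)$ comes out non-increasing in $s$. Carefully tracking this tail contribution, rather than reproving the trace class estimate itself, is the principal thing to verify.
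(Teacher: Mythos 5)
Your proposal is correct and follows essentially the same route as the paper: invoke Lemma~\ref{M and M} to reduce the $W_1$ bound to the trace class norm $\lVert \mathcal{K}_{G_N}^{Soft} - \mathcal{K}_{Ai}\rVert_1$, then cite the off-the-shelf estimate of Johnstone and Ma (Proposition~\ref{prop:GUE_edge_Johnstone_Ma_trace_class_estimate}) which already carries the $C e^{-s} N^{-2/3}$ bound and hence the non-increasing $s$-dependence. Your recap of the internal mechanism of that estimate (Plancherel--Rotach asymptotics, the Airy factorization $K_{Ai}(x,y)=\int_0^\infty Ai(x+u)Ai(y+u)\,du$, and Cauchy--Schwarz to pass from trace class to Hilbert--Schmidt) is accurate but supplementary, as the paper simply imports the result rather than re-deriving it.
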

\begin{theorem}\label{L soft}
    (LUE soft edge) Consider an LUE $L_{a,N}$ such that $\lim\limits_{N\rightarrow\infty}\frac{N+a(N)}{N}=\gamma\in[1,\infty)$. Let $I\subset[s,+\infty)$ for $s>-\infty$. Define parameters:
    \begin{equation*}
      \begin{array}{ll}
      \tilde{\mu}_{n,m}:=\left(\sqrt{n+\frac{1}{2}}+\sqrt{m+\frac{1}{2}}\right)^2,\\
      \tilde{\sigma}_{n,m}:=\left(\sqrt{n+\frac{1}{2}}+\sqrt{m+\frac{1}{2}}\right)\left(\left(n+\frac{1}{2}\right)^{-\frac{1}{2}}+\left(m+\frac{1}{2}\right)^{-\frac{1}{2}}\right)^{\frac{1}{3}},
      \end{array}
    \end{equation*}
    \begin{equation*}
              \gamma_{L_N}:=\frac{\tilde{\mu}_{N-1,N+a}\sqrt{\tilde{\sigma}_{N,N+a-1}}}{\tilde{\mu}_{N,N+a-1}\sqrt{\tilde{\sigma}_{N-1,N+a}}},
    \end{equation*}
    \begin{equation*}
        \mu_{L}:=\left(\frac{1}{\sqrt{\tilde{\sigma}_{N-1,N+a}}}+\frac{1}{\sqrt{\tilde{\sigma}_{N,N+a-1}}}\right)\left(\frac{1}{\tilde{\mu}_{N-1,N+a}\sqrt{\tilde{\sigma}_{N-1,N+a}}}+\frac{1}{\tilde{\mu}_{N,N+a-1}\sqrt{\tilde{\sigma}_{N,N+a-1}}}\right)^{-1}
    \end{equation*}
    and
        \begin{equation*}
        \sigma_{L}:=\left(1+\gamma_{L_N}\right)\left(\frac{1}{\tilde{\sigma}_{N-1,N+a}}+\frac{\gamma_{L_N}}{\tilde{\sigma}_{N,N+a-1}}\right)^{-1}.
    \end{equation*}
Let $\mathcal{X}_{L_{a,N}}^{Soft}$ denote the soft-edge-scaled point process for $L_{a,N}$. The kernel is given by $K_{L_{a,N}}^{Soft}(x,y):=\sigma_{L}K_{L_{a,N}}(t_{L_{N}}(x),t_{L_{N}}(y))$ where $t_{L_N}(x):=\mu_{L}+\sigma_{L}x$ for any $x\in I$. Then on $A\subset I$, for any $s\in\mathbb{R}$, there exists an $n(s,\gamma)\in\mathbb{N}$ such that for any $N>n(s,\gamma)$,
\begin{equation}\label{main L soft}
    W_1(\mathcal{N}_{L_{a,N}}^{Soft},\mathcal{N}_{Ai})\le\frac{g(s)e^{-s}}{N^{\frac{2}{3}}}
\end{equation}
for $g(s)$ a continuous non-increasing function of $s$.
\end{theorem}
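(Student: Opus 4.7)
The plan is to apply Lemma \ref{M and M} to reduce the Wasserstein bound to an estimate on the trace-class norm $\|\mathcal{K}_{L_{a,N}}^{Soft}-\mathcal{K}_{Ai}\|_1$ on $L^2(A)$. As the introduction indicates, trace-class scaling estimates at the soft edge are already available in the literature, so the main task is to set up the scaled Laguerre kernel so that the available estimate produces precisely the bound (\ref{main L soft}).

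First I would rewrite the scaled kernel via the Christoffel--Darboux identity as
\begin{equation*}
K_{L_{a,N}}^{Soft}(x,y) = \frac{\phi_{N,1}(x)\phi_{N,2}(y)-\phi_{N,1}(y)\phi_{N,2}(x)}{x-y},
\end{equation*}
where $\phi_{N,1},\phi_{N,2}$ are the orthonormalized Laguerre wave functions of degrees $N$ and $N-1$, each evaluated at $t_{L_N}(x)$ and carrying their square-root weights. The rather elaborate definitions of $\mu_L,\sigma_L,\gamma_{L_N}$ are engineered so that after rescaling the two consecutive Laguerre wave functions are balanced in amplitude; one checks in particular that $\gamma_{L_N}\to 1$ and that $(\phi_{N,1},\phi_{N,2})\to(Ai,Ai')$ uniformly on $[s,\infty)$ at the optimal edge rate $N^{-2/3}$. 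Without this symmetrization the leading error in the rank-one decomposition below would be of order $N^{-1/3}$ rather than $N^{-2/3}$.

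Next I would split the operator difference into a sum of rank-one (Hankel-type) pieces,
\begin{equation*}
\mathcal{K}_{L_{a,N}}^{Soft}-\mathcal{K}_{Ai} \;=\; \sum_{k} u_k\otimes v_k,
\end{equation*}
so that the triangle inequality gives $\|\mathcal{K}_{L_{a,N}}^{Soft}-\mathcal{K}_{Ai}\|_1\le\sum_k\|u_k\|_{L^2(A)}\|v_k\|_{L^2(A)}$. The required $L^2(A)$ norms come from the uniform Plancherel--Rotach asymptotics for Laguerre polynomials in the soft-edge regime (accessible via the Deift--Zhou Riemann--Hilbert steepest descent, and in the form most convenient here via Bornemann's work on trace-class scaling limits of Sturm--Liouville kernels). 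These inputs give pointwise bounds of the shape $|\phi_{N,j}(x)-Ai^{(j-1)}(x)|\le C\,N^{-2/3}h(x)$ together with uniform exponential decay $|\phi_{N,j}(x)|+|Ai^{(j-1)}(x)|\le Ce^{-x}$ on $[s,\infty)$; integrating over $[s,\infty)$ produces the prefactor $g(s)e^{-s}$, while the rate $N^{-2/3}$ is inherited directly from the edge scale $\sigma_L\sim N^{-1/3}$.

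The main obstacle is the passage from pointwise asymptotics to trace-class convergence: Schatten-$1$ bounds do not follow from pointwise kernel control alone, and the rank-one decomposition above is exactly what makes this step tractable by reducing trace-norm control to $L^2$ control of the individual factors. The threshold $n(s,\gamma)$ enters because the uniform Plancherel--Rotach bounds are valid only once the rescaled window $[t_{L_N}(s),\infty)$ lies strictly inside the relevant Laguerre turning-point region; this forces $N$ to grow when $s$ is very negative, and correspondingly $g$ is taken continuous and non-increasing in $s$.
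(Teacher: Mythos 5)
Your proposal matches the paper's proof: Theorem \ref{L soft} is obtained by combining Lemma \ref{M and M} with the trace-class estimate $\lVert \mathcal{K}_{L_{a,N}}^{Soft}-\mathcal{K}_{Ai}\rVert_1\le C(s)e^{-s}N^{-2/3}$, which the paper simply quotes from El Karoui (Proposition \ref{prop:LUE_edge_ElKaroui_trace_class_estimate}) rather than reproving. Your additional sketch of how that estimate is established is not needed for the paper's argument (and, as a minor point, the mechanism in the cited work is a factorization of each kernel into a product of Hilbert--Schmidt Hankel-type operators followed by the operator Cauchy--Schwarz inequality, not a literal rank-one decomposition of the operator difference, which is not finite rank).
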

\begin{theorem}\label{J soft}
    (JUE soft edge) Let $I\subset[s,+\infty)$ for $s>-\infty$ and $J_{a,b,N}$ be a JUE with parameters satisfying $\lim\limits_{N\rightarrow\infty}\frac{a(N)}{N}=\alpha\in(0,\infty)$ and $\lim\limits_{N\rightarrow\infty}\frac{b(N)}{N}=\beta\in[0,\infty)$. Introduce parameters:
    \begin{equation*}
        \cos\phi(N):=\frac{a(N)-b(N)}{a(N)+b(N)+2N+1},
    \end{equation*}
    \begin{equation*}
      \cos\theta(N):=\frac{a(N)+b(N)}{a(N)+b(N)+2N+1},
    \end{equation*}
     \begin{equation*}
        x(N):=-\cos\left(\phi(N)+\theta(N)\right),
    \end{equation*}
     \begin{equation*}
    y^3(N):=\frac{2\sin^2\left(\phi(N)+\theta(N)\right)}{(a(N)+b(N)+2N+1)^2\sin(\phi(N))\sin(\theta(N))}
    \end{equation*}
    and
\begin{equation*}
        u_N:=\tanh^{-1}(x(N)),~v_N:=\frac{y(N)}{1-x^2(N)}.
\end{equation*}
Further define:
\begin{equation*}
    \mu_{J}:=\frac{v_N^{-1}u_N+v_{N-1}^{-1}u_{N-1}^{-1}}{v_N^{-1}+v_{N-1}^{-1}}~and~\sigma_{J}:=2\left(v_{N}^{-1}+v_{N-1}^{-1}\right)^{-1}.
\end{equation*}
Let $\mathcal{X}_{J_{a,b,N}}^{Soft}$ denote the soft-edge-scaled point process for $J_{a,b,N}$. The kernel is given by $K_{J_{a,N}}^{Soft}(x,y):=\sqrt{t'_{J_N}(x)t'_{J_N}(y)}K_{J_{a,b,N}}(t_{J_{N}}(x),t_{J_{N}}(y))$ where $t_{J_N}(x):=\tanh\left(\mu_{J}+\sigma_{J}x\right)$ for any $x\in I$. Then on $A\subset I$, for any $s\in\mathbb{R}$, there exists an $n(s,\gamma)\in\mathbb{N}$ such that for any $N>n(s,\alpha,\beta)$,
\begin{equation}\label{main J soft}
    W_1(\mathcal{N}_{J_{a,b,N}}^{Soft},\mathcal{N}_{Ai})\le\frac{h(s)e^{-\frac{s}{2}}}{N^{\frac{2}{3}}}
\end{equation}
for $h(s)$ a continuous non-increasing function of $s$.
\end{theorem}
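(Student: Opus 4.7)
The plan is to follow the same template as Theorems \ref{G soft} and \ref{L soft}: apply Lemma \ref{M and M} to reduce the Wasserstein estimate to the trace class bound
\[
W_1(\mathcal{N}_{J_{a,b,N}}^{Soft}, \mathcal{N}_{Ai}) \leq \|\mathcal{K}_{J_{a,b,N}}^{Soft} - \mathcal{K}_{Ai}\|_{1},
\]
and then bound the right-hand side using existing uniform edge asymptotics for Jacobi polynomials. As indicated in the outline of proof, the trace norm estimate itself is available in the literature; the content of the theorem is primarily to select scaling parameters that put the rescaled Jacobi kernel into the normalization used by those results.

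First I would verify that the substitution $t_{J_N}(x) = \tanh(\mu_J + \sigma_J x)$ together with the Jacobian pre-factor $\sqrt{t'_{J_N}(x) t'_{J_N}(y)}$ constitutes a valid soft edge scaling. The tanh substitution sends $\mathbb{R}$ diffeomorphically onto $(-1,1)$, the natural support of the JUE after passing from $[0,1]$ to $[-1,1]$, and the Jacobian pre-factor corresponds to a unitary conjugation that preserves self-adjointness and the trace class norm. The angles $\phi(N)$ and $\theta(N)$ are the classical Plancherel--Rotach parameters, while $u_N$ and $v_N$ play the roles of edge location and edge scale for the Jacobi equilibrium measure. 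The weighted means $\mu_J$ and $\sigma_J$ are engineered so that the two Christoffel--Darboux indices $N$ and $N-1$ are simultaneously aligned with the Airy edge under the substitution.

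Second, with the scaling pinned down, the kernel difference $K_{J_{a,b,N}}^{Soft}(x,y) - K_{Ai}(x,y)$ can be estimated using Plancherel--Rotach or Riemann--Hilbert style uniform asymptotics for Jacobi polynomials in the soft edge regime. These yield pointwise estimates of order $N^{-2/3}$ with Airy-type exponential decay in $x$ and $y$. Combining this decay with a factorization of the residual kernel as a product of Hilbert--Schmidt pieces (or a Schur-type test) upgrades the pointwise bound to a trace norm estimate. The $e^{-s}$ factor in the conclusion arises from integrating the Airy decay on $A \subset [s, \infty)$, and the condition $N > n(s, \alpha, \beta)$ reflects the regime in which the uniform asymptotic expansion is valid.

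The main obstacle is the simultaneous alignment of the two Christoffel--Darboux edges (the $N$-th and the $(N-1)$-th Jacobi polynomials) through a single tanh substitution, which accounts for the somewhat involved definitions of $\mu_J$ and $\sigma_J$. Because both $a(N)$ and $b(N)$ scale linearly in $N$, these weighted means must absorb the mismatch between the two edges; once the alignment is verified, the rate $N^{-2/3}$ emerges directly from the $y(N) \sim N^{-2/3}$ behavior baked into $\sigma_J$, and the final bound assembles from the cited trace norm estimates.
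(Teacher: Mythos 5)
Your proposal follows exactly the paper's route: apply Lemma \ref{M and M} to reduce the Wasserstein bound to $\|\mathcal{K}_{J_{a,b,N}}^{Soft}-\mathcal{K}_{Ai}\|_1$, and then invoke the trace class estimate already available in the literature (Proposition \ref{prop:JUE_edge_Johnstone_trace_class_estimate}, from \cite{johnstone2008multivariate}). Your additional discussion of how that cited estimate is obtained internally is accurate context but not part of the paper's (very short) proof, which simply plugs the cited bound into the lemma.
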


Using the soft edge bounds from Theorems \ref{G soft}, \ref{L soft} and \ref{J soft}, we have following results about the largest eigenvalue for different ensembles.
\begin{corollary}\label{largest}
    Let $\lambda_{max}^N$ denote the soft edge-scaled largest eigenvalue of the GUE, LUE or JUE and $F_2(s)$ be the distribution function for the Tracy-Widom~distribution with $\beta=2$. Assume the same parameters stipulations in Theorems \ref{G soft}, \ref{L soft} and \ref{J soft}. For any $s\in\mathbb{R}$, we have
    \begin{equation*}
        |\mathbb{P}\{\lambda_{\max}^N\le s\}-F_2(s)|\le \frac{f(s)}{N^{\frac{2}{3}}}.
    \end{equation*}
for $N$ sufficiently large where $f(s)$ is the same term as what appears in the numerator for (\ref{main G soft}), (\ref{main L soft}) and (\ref{main J soft}).
\end{corollary}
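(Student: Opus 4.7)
The plan is to reduce Corollary \ref{largest} to Theorems \ref{G soft}, \ref{L soft} and \ref{J soft} using the exact same device that takes Theorem \ref{L hard} to Corollary \ref{smallest}. The driving identity is that the CDF of an extremal eigenvalue can be rewritten as a point-counting probability: for each of the three ensembles one has
\begin{equation*}
\{\lambda_{\max}^N\le s\}=\{\mathcal{N}^{Soft}((s,\infty))=0\},
\end{equation*}
and by the same token $F_2(s)=\mathbb{P}\{\mathcal{N}_{Ai}((s,\infty))=0\}$, since the Tracy--Widom distribution $F_2$ is by definition the law of the largest point of the Airy process.

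Granted this rewriting, the next step is to invoke the elementary bound $|\mathbb{P}\{X=0\}-\mathbb{P}\{Y=0\}|\le d_{TV}(X,Y)$ for integer-valued $X,Y$ and chain it with Lemma \ref{M and M} to obtain
\begin{equation*}
|\mathbb{P}\{\lambda_{\max}^N\le s\}-F_2(s)|\le d_{TV}(\mathcal{N}^{Soft}((s,\infty)),\mathcal{N}_{Ai}((s,\infty)))\le W_1(\mathcal{N}^{Soft}((s,\infty)),\mathcal{N}_{Ai}((s,\infty))).
\end{equation*}
The third step is to apply the appropriate soft-edge theorem with $A=(s,\infty)$ (or, equivalently, $A=[s,\infty)$, as the two cannot be distinguished by a simple point process) and conclude. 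In the three cases the function $f(s)$ in the statement is then $C(s)$, $g(s)e^{-s}$, or $h(s)e^{-s}$, inherited from Theorems \ref{G soft}, \ref{L soft}, and \ref{J soft} respectively.

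The only genuine subtlety, and the step I expect to require the most care, is checking that Theorems \ref{G soft}, \ref{L soft} and \ref{J soft} can be applied with the unbounded choice $A=(s,\infty)$: one must verify that both $\mathcal{K}^{Soft}$ and $\mathcal{K}_{Ai}$ are trace class on $L^2((s,\infty))$ so that Lemma \ref{M and M} is available in the first place. For the Airy operator this is classical, and for the finite-$N$ soft-edge operators the required decay is inherited from the exponential tails of the Hermite, Laguerre and Jacobi functions past the edge, so nothing new is needed. Everything else is a mechanical copy of the proof of Corollary \ref{smallest}, with $\lambda_{\max}$ playing the role of $\lambda_{\min}$ and the complementary half-line $(s,\infty)$ replacing $[0,s]$; the restriction to $N$ sufficiently large is exactly the one already present in the underlying theorems.
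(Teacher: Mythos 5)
Your proposal is correct and follows essentially the same route as the paper: rewrite the largest-eigenvalue CDF as a vanishing point count on $(s,\infty)$, bound the difference by $d_{TV}\le W_1$ via Lemma \ref{M and M}, and invoke the soft-edge theorems. Your extra remark about verifying trace-class on the unbounded half-line is a reasonable point of care that the paper leaves implicit (it is supplied by the cited trace-class estimates on $[s,\infty)$), but it does not change the argument.
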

\begin{proof} By the same logic as in the proof of Corollary \ref{smallest},
\begin{equation*}
\begin{split}
     |\mathbb{P}\{\lambda_{\max}^N\le s\}-F_2(s)|&=|\mathbb{P}\{\mathcal{N}^{Soft}((s,\infty))=0\}-\mathbb{P}\{\mathcal{N}_{Ai}((s,\infty))=0\}|
     \\&\le d_{TV}(\mathcal{N}^{Soft}((s,\infty)),\mathcal{N}_{Ai}((s,\infty)))
     \\&\le W_1(\mathcal{N}^{Soft}((s,\infty)),\mathcal{N}_{Ai}((s,\infty)))\\&\le\frac{f(s)}{N^{\frac{2}{3}}}
\end{split}
\end{equation*}
for $N$ large enough, from the previous theorems.
\end{proof}
With the help of Lemma \ref{M and M}, we can get rid of the exponential term derived by comparing the Fredholm determinants as in \cite[Lemma $1.1$]{el2006rate}, \cite[Equation $(32)$]{johnstone2008multivariate} and \cite[Equation $(26)$]{johnstone2012fast}. Consequently, 
Corollary \ref{largest} above actually improved the similar results in those three papers as functions $f$'s appearing in this corollary are smaller than they are in
\cite[Theorem $2$]{el2006rate}, \cite[Theorem $3$]{johnstone2008multivariate} and \cite[Theorem $1$]{johnstone2012fast}.
\subsection{Comparison with Other Results}

Here we list some well-known ROC results in RMT and briefly compare them with our results.

The main known ROC results for the GUE are macroscopic in nature and are proven by completely different mathematical methods. We list several of the major results to contrast with our results. In $2005$ Götze and Tikhomirov, building on a number of works that can be found in that paper, proved the expected spectral distribution function of GUE converges to the semicircle law with the conjectured optimal ROC of $O(N^{-1})$ \cite{gotze_rate_2005}. The ROC was later extended to all Wigner matrices in \cite{gotze_optimal_2016}. In addition to work on the expected spectral distribution, there has been much work on ROCs for the empirical spectral measure for Wigner matrices (including the GUE). We cite an ROC of $O(\frac{\log{N}^C}{N})$ for some constant $C$ for convergence of the empirical spectral measure of Wigner matrices with the semicircle law, but do not go deeper into this line of research \cite{gotze_local_2018}.

There is another line of research looking at convergence of the empirical spectral measure to the semicircle law in $L^p$-Wasserstein metrics. We list \cite{meckes2013concentration} where they obtained $O(N^{-\frac{2}{3}})$ for the empirical spectral measure in $L^1$-Wasserstein metric, as well as \cite{dallaporta_eigenvalue_2012} which obtained $O(\frac{\log{N}}{N^2})$ for the expectation of the square of $L^2$-Wasserstein distance between the spectral measure and the semicircle law.

All the ROC results mentioned above are macroscopic, or global, which is different from the mesoscopic regime. It is also important to emphasize the distinction between our mesoscopic perspective and the microscopic scale in RMT, often associated with eigenvalue rigidity. Microscopic rigidity, such as the local semicircle (see \cite{erdos_rigidity_2012}) or Marchenko-Pastur (see \cite{pillai_universality_2014}) laws, describes the extreme precision with which a single eigenvalue is located near its classical prediction. While our discussion is centered on counting functions, it would be incorrect to view mesoscopic information as stronger or weaker than macro- or microscopic results. Although eigenvalue rigidity is strong enough to derive deviation inequalities for counting functions as \cite[Equation $(9)$]{dallaporta_eigenvalue_2012} and \cite[Equation $(10)$]{dallaporta2013eigenvaluevarianceboundscovariance}, it does not directly imply our specific mesoscopic ROCs. This is because a direct comparison with our target processes (sine, Airy, Bessel) is not meaningful, as these limiting point processes themselves lack such rigidity. Therefore, our mesoscopic discussion should be regarded as a line of inquiry that runs parallel to the classical global and local results in RMT, without a strict hierarchical relationship of inclusion.

 Our findings at the mesoscopic scale also carry significant weight due to the flexibility of our framework, which allows the spatial regime to vary with the overall system size. Unlike a microscopic analysis, which is confined to a fixed, local scale, our approach permits the scaling parameter $s$ to change, subject to asymptotic conditions that ensure we remain within the mesoscopic regime. This dynamic scaling indeed covers some the rigid setting of microscopic limits, for example, Forrester's discussion for the convergence of the kernel functions in \cite{log-gas}, and provides a powerful tool for probing the spatial structure of the point process across different scales. Indeed, aside from extreme values of the parameter, we cannot expect our results to be optimal in the spatial parameter $s$, as the estimates used in the proof are not sharp. However, for any fixed $s$, we conjecture that our results—which in fact cover certain microscopic regimes—are optimal with respect to the dimension $N$.
    
The microscopic ROC for GUE bulk has been indicated in the Chapter $9$ of \cite{log-gas}. The author proved $O(N^{-\frac{1}{2}})$ pointwise convergence for the scaled kernel function to the sine kernel function in $(9.5)$, which already implies $O(N^{-\frac{1}{2}})$ as the mesoscopic ROC for the GUE towards sine process with respect to $W_1$ as in our discussion. Our result in fact improved this rate to $O(N^{-1})$, which has the same order as the macroscopic ROC in \cite{gotze_rate_2005} mentioned above. Therefore, it is reasonable to conjecture our mesoscopic rate for GUE bulk is optimal.

For the GUE, LUE, and JUE soft edge we cite three papers: \cite{el2006rate}, \cite{johnstone2008multivariate}, and \cite{johnstone2012fast}. Following the seminal work of \cite{tracy1994level} and \cite{tracy1996orthogonal} on the Tracy-Widom distribution there was considerable interest in proving ROC results for the largest eigenvalues of various random matrix ensembles. Without going into full details we highlight that each of the three above papers proved $O(N^{-\frac{2}{3}})$ convergence for the largest eigenvalues in the GUE, LUE, and JUE to the Tracy-Widom distribution. Crucially for this work, in all three cases the authors had to make the precise trace class estimates we require for our results (albeit for a related but different reason). As highlighted above, our soft edge results immediately imply the ROCs obtained in these papers. The method in all of those three papers relies heavily on the auxiliary variable introduced by the Liouville-Green transform. Follow the sophisticated analysis in \cite{el2006rate}, we see it is unrealistic to expect a rate higher than $N^{\frac{2}{3}}$, as the power $2/3$ is intrinsic in our L-G transform around the largest eigenvalue. As a result, we guess, it is very likely that the order $N^{-\frac{2}{3}}$ is optimal in all three cases.

For the LUE hard edge, Forrester's discussion in \cite{log-gas} indeed indicates $O(N^{-1})$ as the mesoscopic ROC for the scaled LUE towards the Bessel process. He proved his conjecture that, pointwise, the ROC for the kernel function can be improved to $O(N^{-2})$ after some better rescaling in \cite{forrester2019finite}. Our result in Theorem \ref{L hard} implies that not only is $O(N^{-2})$ the pointwise ROC for the kernel function, but also the mesoscopic ROC for the point process with respect to the $W_1$ metric, which strengthens Forrester's result. As our result matches the optimal pointwise ROC of the kernel function, we speculate the order $N^{-2}$ is also optimal in our mesoscopic regime.

Another important non-Hermitian ensemble in RMT is the ensemble on some classical compact group. For example, the joint distribution of the eigenvalues for a Haar matrix on the $N\times N$ unitary group $\mathbb{U}_N$, which is also called a {\bf Circular~Unitary~Ensemble~(CUE)}. A comparable messocropic ROC for CUE to the sine point process was given as $O(N^{-\frac{3}{2}})$ by E. Meckes and M. Meckes in \cite{meckes2015self}. An improvement of that result and similar results for the other compact classical groups can be seen in \cite{cai2025ratesbulkconvergenceensembles}.
\section{Proofs}\label{Proof}
As already stated, our main challenge is to control the trace class norm $\|\cdot\|_1$ of the difference between two operators. However, directly computing the trace class norm of the difference between two operators is not easy. 
% This leads us to use the Cauchy-Schwarz inequality (\ref{CS in}) because the Hilbert-Schmidt norm $\|\cdot\|_2$ of an integral operator is equal to the $L^2$ norm of its integral kernel as in $(\ref{2 to L^2})$. 

For an integral operator, the Hilbert-Schmidt norm (the Schatten $2$-norm) is easier to calculate than the trace class norm and is given by 

\begin{equation}\label{2 to L^2}   \|\mathcal{K}\|_{2,U}:=\left(\sum_{s_j(\mathcal{K})\in U}s_j^2(\mathcal{K})\right)^{\frac{1}{2}}=\left(\int_U\int_U |K|^2(x,y)dydx\right)^{\frac{1}{2}}=\|K\|_{L^2(U^2)}.
\end{equation}

So we use the \textbf{operator~Cauchy-Schwarz~inequality}:
\begin{equation}\label{CS in}
\|\mathcal{K}_1\mathcal{K}_2\|_1\le\|\mathcal{K}_1\|_2\|\mathcal{K}_2\|_2=\|K_1\|_{L^2}\|K_2\|_{L^2}
\end{equation}
where $\mathcal{K}_1$ and $\mathcal{K}_2$ are trace class with kernel $K_1$ and $K_2$ respectively to go from estimating trace class norms to estimating Hilbert-Schmidt norms (see \cite{simon2015operator} as a good reference for the operator theory basics we list here). 

The work in the proofs boils down to two things. First, decomposing the operators appropriately so that Cauchy-Schwarz can be used. Second, getting non-asymptotic approximations of classical special functions needed to finish the estimates. Our methods are similar to \cite{el2006rate} and \cite{johnstone2012fast} where they made the trace class estimates that we use for the soft edge results. 

The following inequality is useful in this paper.
\begin{lemma}\label{21in}
    If $\mathcal{A}$, $\mathcal{B}$ and $\mathcal{C}$ are Hilbert-Schmidt operators on a Hilbert space $X$, then the following inequality holds:
    \begin{equation*}
2\|\mathcal{A}\mathcal{B}+\mathcal{B}\mathcal{A}-\mathcal{C}\mathcal{C}\|_1\le\|\mathcal{A}+\mathcal{B}-\sqrt2\mathcal{C}\|_2\|\mathcal{A}+\mathcal{B}+\sqrt2\mathcal{C}\|_2+\|\mathcal{A}-\mathcal{B}\|^2_2.
    \end{equation*}
\end{lemma}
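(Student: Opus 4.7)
The plan is to reduce the left-hand side to an expression involving two products of Hilbert--Schmidt operators and then invoke the operator Cauchy--Schwarz inequality \eqref{CS in} on each product. The starting observation is the polarization-type identity
\[
(\mathcal{A}+\mathcal{B})^2 - (\mathcal{A}-\mathcal{B})^2 = 2(\mathcal{A}\mathcal{B}+\mathcal{B}\mathcal{A}),
\]
which follows by expanding the two squares and cancelling $\mathcal{A}^2+\mathcal{B}^2$. Rearranging and subtracting $2\mathcal{C}\mathcal{C}$ gives the identity
\[
2(\mathcal{A}\mathcal{B}+\mathcal{B}\mathcal{A}-\mathcal{C}\mathcal{C}) = \bigl[(\mathcal{A}+\mathcal{B})^2 - 2\mathcal{C}^2\bigr] - (\mathcal{A}-\mathcal{B})^2,
\]
and then the trace-norm triangle inequality reduces the claim to controlling $\|(\mathcal{A}+\mathcal{B})^2 - 2\mathcal{C}^2\|_1$ and $\|(\mathcal{A}-\mathcal{B})^2\|_1$ separately.

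The second piece is immediate: by \eqref{CS in}, $\|(\mathcal{A}-\mathcal{B})^2\|_1 \le \|\mathcal{A}-\mathcal{B}\|_2^2$, which is the last summand on the right-hand side. The first piece is the main obstacle, because noncommutativity blocks the naive single-factor identity $(\mathcal{A}+\mathcal{B})^2 - 2\mathcal{C}^2 = (\mathcal{A}+\mathcal{B}-\sqrt{2}\mathcal{C})(\mathcal{A}+\mathcal{B}+\sqrt{2}\mathcal{C})$: the cross terms $\sqrt{2}(\mathcal{A}+\mathcal{B})\mathcal{C}$ and $-\sqrt{2}\mathcal{C}(\mathcal{A}+\mathcal{B})$ need not cancel. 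The remedy is to symmetrize the factorization. Setting $\mathcal{U}:=\mathcal{A}+\mathcal{B}-\sqrt{2}\mathcal{C}$ and $\mathcal{V}:=\mathcal{A}+\mathcal{B}+\sqrt{2}\mathcal{C}$, direct expansion of both orderings and addition yields
\[
\mathcal{U}\mathcal{V} + \mathcal{V}\mathcal{U} = 2\bigl[(\mathcal{A}+\mathcal{B})^2 - 2\mathcal{C}^2\bigr],
\]
since the offending antisymmetric cross terms cancel between the two products. Applying the trace-norm triangle inequality and then \eqref{CS in} to each of $\mathcal{U}\mathcal{V}$ and $\mathcal{V}\mathcal{U}$ gives
\[
\bigl\|(\mathcal{A}+\mathcal{B})^2-2\mathcal{C}^2\bigr\|_1 \le \tfrac{1}{2}\bigl(\|\mathcal{U}\mathcal{V}\|_1 + \|\mathcal{V}\mathcal{U}\|_1\bigr) \le \|\mathcal{U}\|_2\,\|\mathcal{V}\|_2,
\]
which is exactly the first summand on the right-hand side of the claim.

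Assembling the two estimates via the triangle inequality from the first paragraph produces the stated bound. The hypothesis that $\mathcal{A}$, $\mathcal{B}$, $\mathcal{C}$ are Hilbert--Schmidt is used only to guarantee that all operator products appearing above lie in the trace class, so that \eqref{CS in} is applicable. The only nontrivial step is the symmetrization $\tfrac{1}{2}(\mathcal{U}\mathcal{V}+\mathcal{V}\mathcal{U})$, which is what upgrades the commutative-case factorization to a valid identity in the general noncommutative setting.
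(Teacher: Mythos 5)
Your proof is correct and follows essentially the same route as the paper: the same polarization identity $2(\mathcal{A}\mathcal{B}+\mathcal{B}\mathcal{A}-\mathcal{C}^2)=(\mathcal{A}+\mathcal{B})^2-(\mathcal{A}-\mathcal{B})^2-2\mathcal{C}^2$, followed by the triangle inequality and the symmetrized difference-of-squares factorization $\tfrac{1}{2}(\mathcal{U}\mathcal{V}+\mathcal{V}\mathcal{U})$ with Cauchy--Schwarz on each product. The paper packages that last step as the auxiliary bound $\|\mathcal{X}^2-\mathcal{Y}^2\|_1\le\|\mathcal{X}-\mathcal{Y}\|_2\|\mathcal{X}+\mathcal{Y}\|_2$, but the content is identical.
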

\begin{proof} By the Cauchy-Schwarz inequality (\ref{CS in}), we see that $\|\mathcal{A}^2-\mathcal{B}^2\|_1=\frac{1}{2}\|(\mathcal{A}+\mathcal{B})(\mathcal{A}-\mathcal{B})+(\mathcal{A}-\mathcal{B})(\mathcal{A}+\mathcal{B})\|_1\le\|\mathcal{A}-\mathcal{B}\|_2\|\mathcal{A}+\mathcal{B}\|_2$ and hence,
\begin{align*}
  2\|\mathcal{A}\mathcal{B}+\mathcal{B}\mathcal{A}-\mathcal{C}^2\|_1&=\|(\mathcal{A}+\mathcal{B})^2-(\mathcal{A}-\mathcal{B})^2-2\mathcal{C}^2\|_1\\&\le\|(\mathcal{A}+\mathcal{B})^2-2\mathcal{C}^2\|_1+\|(\mathcal{A}-\mathcal{B})^2\|_1\\&\le\|\mathcal{A}+\mathcal{B}-\sqrt2\mathcal{C}\|_2\|\mathcal{A}+\mathcal{B}+\sqrt2\mathcal{C}\|_2+\|\mathcal{A}-\mathcal{B}\|_2^2.&\qedhere
\end{align*}
\end{proof}

 \subsection{GUE bulk}\label{GUE bulk}
We first show how to decompose the operators, then give the function approximation needed, and finally put these together to give the desired result for this case.

Our factorization relies on the philosophy of the following lemma.
\begin{lemma}\label{FA}
    Suppose $\mathcal{K}$ is an integral operator with kernel $K(x,y)$ defined on $D^2\subset\mathbb{R}^2$ for some domain $D$. If $K$ can be expressed as
    \begin{equation*}
        K(x,y)=\int_D m(xt)n(yt)dt
    \end{equation*}
    for some continuous function $m,~n$ on $D$, then
        $\mathcal{K}=\mathcal{M}\mathcal{N}$
    where $\mathcal{M}$ and $\mathcal{N}$ have the kernels $m(xy)$ and $n(xy)$ respectively.
\end{lemma}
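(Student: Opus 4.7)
The proof of Lemma \ref{FA} should be essentially a direct application of Fubini's theorem, so the main work is setting up the composition carefully and recognizing the inner integral as an application of the second operator. I will spell out the plan.

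First, I would fix a test function $f$ (say $f$ continuous with compact support in $D$, or $f \in L^2(D)$, whichever regularity class the authors are implicitly working in — since $m$ and $n$ are only assumed continuous, a dense class of test functions is enough to identify the two operators). Then I would write
\begin{equation*}
\mathcal{K}(f)(x) \;=\; \int_D K(x,y)\,f(y)\,dy \;=\; \int_D \left( \int_D m(xt)\,n(yt)\,dt \right) f(y)\,dy,
\end{equation*}
using the hypothesized representation of $K$.

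Next, I would apply Fubini's theorem to interchange the order of integration in $t$ and $y$, obtaining
\begin{equation*}
\mathcal{K}(f)(x) \;=\; \int_D m(xt) \left( \int_D n(yt)\,f(y)\,dy \right) dt.
\end{equation*}
Since $n(yt) = n(ty)$ (the argument is just the product of two scalars), the inner integral is exactly $\mathcal{N}(f)(t)$, where $\mathcal{N}$ is the integral operator with kernel $n(xy)$. Thus
\begin{equation*}
\mathcal{K}(f)(x) \;=\; \int_D m(xt)\,\mathcal{N}(f)(t)\,dt \;=\; \mathcal{M}\bigl(\mathcal{N}(f)\bigr)(x),
\end{equation*}
identifying $\mathcal{K} = \mathcal{M}\mathcal{N}$ on the chosen dense class, and hence as operators wherever $\mathcal{M}\mathcal{N}$ makes sense.

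The only real obstacle is justifying the Fubini swap, which requires $(t,y) \mapsto m(xt)\,n(yt)\,f(y)$ to be absolutely integrable on $D \times D$ for (a.e.) $x$. In the concrete applications in the paper, $D$ will be a bounded set (for instance intervals arising in the GUE bulk or LUE hard-edge decomposition) and $m,n$ will be continuous hence bounded there, so the hypothesis of Fubini is automatic once $f$ is taken in the dense class above. If $D$ is unbounded, one would need an additional decay or $L^2$ assumption on $m,n$; the paper seems to use the lemma only in settings where this is manifestly satisfied, so I would state the lemma as a ``formal'' factorization principle and check integrability case-by-case when it is invoked. This is a plan rather than a full proof because essentially the whole argument is Fubini plus a relabeling of the dummy variable.
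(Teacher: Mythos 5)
Your proposal is correct and is essentially the paper's own argument: the paper likewise applies Fubini's theorem to identify $\mathcal{M}\mathcal{N}f$ with $\mathcal{K}f$ for a ``suitable'' test function $f$, merely running the computation in the opposite direction (starting from $(\mathcal{M}\mathcal{N})(f)(x)$ and arriving at $\int_D K(x,z)f(z)\,dz$). Your added remarks on justifying the Fubini swap are more careful than the paper's, which leaves the integrability hypothesis implicit, but they do not change the substance of the proof.
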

   \begin{proof} For some suitable function $f$ defined on $D$, by Fubini's theorem, we have
\begin{align*}
        (\mathcal{M}\mathcal{N})(f)(x)&=\int_D m(xy)(\mathcal{N}f)(y)dy=\int_Dm(xy)\left(\int_Dn(yz)f(z)dz\right)dy\\
        &=\int_D\int_Df(z)m(xy)n(zy)dzdy\\
        &=\int_Df(z)dz\int_Dm(xy)n(zy)dy\\
        &=\int_DK(x,z)f(z)dz=\mathcal{K}f(x). &\qedhere
\end{align*}
\end{proof}

\begin{proposition}\label{GUE fac}
	{\rm(GUE decomposition)} Let $I\coloneqq [-s,s]$ for $0 < s < \infty$.
	Define integral operators $\mathcal{C}$ and $\mathcal{S}$ on $I^2$ with kernels 
	\begin{equation*}
	C(x,y) = \frac{1}{\sqrt{2s}}\cos\left(\frac{\pi}{s} xy\right), \qquad S(x,y) = \frac{1}{\sqrt{2s}}\sin\left(\frac{\pi}{s} xy\right).
	\end{equation*} 
	Then, 
	\begin{equation}\label{sine fac}
	\mathcal{K}_{Sine} = \mathcal{C}\mathcal{C} + \mathcal{S}\mathcal{S}.
	\end{equation}
	Let $c_N = \frac{\pi}{\sqrt{2N}}$, and define the following integral operators $\mathcal{E}_{N}$, $\mathcal{E}_{N-1}$, $\mathcal{F}_{N}$, $\mathcal{F}_{N-1}$, $\mathcal{G}_{N}$, and $\mathcal{G}_{N-1}$ with kernels on $I^2$ given by
	\begin{equation*}
	\begin{split}
	E_{N}(x,y) & = \left(\frac{\pi}{2s}\right)^{\frac{1}{2}}
	\left(\frac{N}{2}\right)^{\frac{1}{4}}
	\text{ } \psi_N\left(\frac{c_N}{s}xy\right), \\
	E_{N-1}(x,y) & = \left(\frac{\pi}{2s}\right)^{\frac{1}{2}}
	\left(\frac{N}{2}\right)^{\frac{1}{4}} 
	\text{ } \psi_{N-1}\left(\frac{c_N}{s}xy\right),\\
	F_{N}(x,y) & = \left( \frac{\pi}{2s}\right)	\left(\frac{1}{2N}\right)^{\frac{1}{4}}
	\text{ } \left(xy\right) \text{ } \psi_{N}\left(\frac{c_N}{s}xy\right),\\
	F_{N-1}(x,y) & = \left( \frac{\pi}{2s}\right) \left(\frac{1}{2N}\right)^{\frac{1}{4}}
	\text{ } (xy) \text{ }\psi_{N-1}\left(\frac{c_N}{s}xy\right),\\
	G_{N}(x,y) & = \left( \frac{\pi}{2s}\right)	\left(\frac{1}{2N}\right)^{\frac{1}{4}} 
	\text{ } \psi_{N}\left(\frac{c_N}{s}xy\right), \text{ and }\\
	G_{N-1}(x,y) & = \left( \frac{\pi}{2s}\right)	\left(\frac{1}{2N}\right)^{\frac{1}{4}} 
	\text{ } \psi_{N-1}\left(\frac{c_N}{s}xy\right)
	\end{split}
	\end{equation*}
where $\psi_j(x):=\frac{e^{-\frac{x^2}{2}}}{\sqrt{\sqrt\pi 2^j j!}}H_j(x)=\frac{1}{\sqrt{\sqrt\pi 2^j j!}}e^{\frac{x^2}{2}}\frac{d^j}{x^j}e^{-x^2}$ is the $j$th scaled Hermite function. 
	Then,
	\begin{equation}\label{G fac}
	\begin{split}
	\mathcal{K}^{Bulk}_{G_N}  
	= \mathcal{E}_{N}\mathcal{E}_{N} + \mathcal{E}_{N-1}\mathcal{E}_{N-1}
	+ \mathcal{F}_{N}\mathcal{G}_{N-1} + \mathcal{F}_{N-1}\mathcal{G}_{N}
	+ \mathcal{G}_{N-1}\mathcal{F}_{N} + \mathcal{G}_{N}\mathcal{F}_{N-1}.
	\end{split}
	\end{equation}
\end{proposition}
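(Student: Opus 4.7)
The plan is to verify both identities in Proposition \ref{GUE fac} by directly computing the operator products on the right-hand side using Lemma \ref{FA}, which converts each product of operators whose kernels factor through $xy$ into an explicit integral.

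For (\ref{sine fac}), both $C(x,y)$ and $S(x,y)$ are functions of the product $xy$, so Lemma \ref{FA} writes
\begin{equation*}
(\mathcal{C}\mathcal{C}+\mathcal{S}\mathcal{S})(x,y)=\frac{1}{2s}\int_{-s}^{s}\left[\cos\!\bigl(\tfrac{\pi xt}{s}\bigr)\cos\!\bigl(\tfrac{\pi ty}{s}\bigr)+\sin\!\bigl(\tfrac{\pi xt}{s}\bigr)\sin\!\bigl(\tfrac{\pi ty}{s}\bigr)\right]\,dt.
\end{equation*}
The cosine subtraction identity collapses the integrand to $\tfrac{1}{2s}\cos\!\bigl(\tfrac{\pi t(x-y)}{s}\bigr)$, and the substitution $u=\pi t/s$ produces $\tfrac{\sin(\pi(x-y))}{\pi(x-y)}=K_{Sine}(x,y)$, giving (\ref{sine fac}).

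For (\ref{G fac}), the six kernels $E_N,E_{N-1},F_N,F_{N-1},G_N,G_{N-1}$ are each of the form $f(xy)$, so Lemma \ref{FA} turns every one of the six operator products into an explicit integral. After the substitution $u=c_N t/s$, which normalizes the range to $[-c_N,c_N]$ and turns the Hermite arguments into $xu$ and $yu$, the two diagonal contributions combine into $\tfrac{N}{2}\int_{-c_N}^{c_N}\bigl[\psi_N(xu)\psi_N(yu)+\psi_{N-1}(xu)\psi_{N-1}(yu)\bigr]du$, while the four $\mathcal{F}\mathcal{G}$-type cross terms symmetrize in $x\leftrightarrow y$ into the single integral $\tfrac{\sqrt{N}(x+y)}{\sqrt{2}}\int_{-c_N}^{c_N}u\bigl[\psi_N(xu)\psi_{N-1}(yu)+\psi_{N-1}(xu)\psi_N(yu)\bigr]du$. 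Meanwhile, the Christoffel--Darboux identity applied to the orthonormal Hermite functions gives
\begin{equation*}
K^{Bulk}_{G_N}(x,y)=\sqrt{N/2}\,\frac{\psi_N(\tilde x)\psi_{N-1}(\tilde y)-\psi_N(\tilde y)\psi_{N-1}(\tilde x)}{x-y},\qquad \tilde x=x/\sqrt{2N},
\end{equation*}
so the remaining task is the integral identity equating the combined integral to this expression.

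To prove this identity, the idea is to exhibit the integrand as an exact $u$-derivative of a product of Hermite functions. A candidate primitive is $\tfrac{1}{x-y}\bigl(\psi_N(xu)\psi_{N-1}(yu)-\psi_N(yu)\psi_{N-1}(xu)\bigr)$, whose derivative can be unpacked using the Hermite recurrences
\begin{equation*}
\psi_n'(v)+v\psi_n(v)=\sqrt{2n}\,\psi_{n-1}(v),\qquad v\psi_n(v)-\psi_n'(v)=\sqrt{2(n+1)}\,\psi_{n+1}(v),
\end{equation*}
together with $\sqrt{2(N-1)}\psi_{N-2}(v)=v\psi_{N-1}(v)+\psi_{N-1}'(v)$ to eliminate the $\psi_{N-2}$ terms that arise. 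Using parity ($\psi_n(-v)=(-1)^n\psi_n(v)$ makes $A$ and $uB$ even in $u$), the integral reduces to twice an integral over $[0,c_N]$, and the fundamental theorem of calculus should then reproduce the Christoffel--Darboux form.

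The main obstacle is the bookkeeping of constants: the factors $\sqrt{N/2}$, $\sqrt{2N}$, $\sqrt{2(N-1)}$ from the recurrences, together with the scaling $c_N=\pi/\sqrt{2N}$ built into the definitions of $\mathcal{E}_N,\mathcal{F}_N,\mathcal{G}_N$, must align so that the endpoint evaluation of the antiderivative at $u=\pm c_N$ matches $K^{Bulk}_{G_N}(x,y)$ at the bulk arguments $\tilde x,\tilde y$. It is this tracking of constants across the six simultaneous contributions, rather than any individual identity, that I expect to be the delicate part.
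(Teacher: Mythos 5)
Your proposal is correct and takes essentially the same approach as the paper: both rest on Lemma \ref{FA}, the parity/fundamental-theorem-of-calculus device with the Christoffel--Darboux numerator $\psi_N(xu)\psi_{N-1}(yu)-\psi_N(yu)\psi_{N-1}(xu)$ as the primitive, and the Hermite ladder recurrences. The only difference is direction of travel --- the paper differentiates that primitive to expand the kernel into the six integrals and then reads off the operator products via Lemma \ref{FA}, whereas you assemble the operator products first and integrate back --- which is the same computation run in reverse.
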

\begin{proof} First of all, define $N_{Sine}(x,y):=\sin(\pi(x-y))$, which is the numerator of the sine kernel in \eqref{sinek}. We have
\begin{equation}\label{int4H}
    N_{Sine}(x,y)=\frac{1}{2}\left(N_{Sine}(x,y)-N_{Sine}(-x,-y)\right)=\frac{1}{2}\int_{-1}^1\frac{dN_{Sine}(tx,ty)}{dt}dt.
\end{equation}
Taking the derivative of $N_{Sine}(tx,ty)$ with respect to $t$ directly gives us
	\begin{equation*}
	\begin{split}
	K_{Sine}(x,y) 
	& = \frac{1}{2} \int_{-1}^1 \left[\cos{(\pi xt)}\cos{(\pi yt)} + \sin{(\pi xt)}\sin{(\pi yt)} \right]dt, \\
	\end{split}
	\end{equation*}
	and changing variables
	\begin{equation}
	\label{eq:Sine_factored}
	\begin{split}
	K_{Sine}(x,y) 
	& = \frac{1}{2s} \int_{-s}^s \left[\cos{\bigg(\frac{\pi}{s} xt\bigg)}\cos{\bigg(\frac{\pi}{s}yt\bigg)} 
	+ \sin{\bigg(\frac{\pi}{s}xt\bigg)}\sin{\bigg(\frac{\pi}{s}yt\bigg)} \right] dt. \\
	\end{split}
	\end{equation}

Secondly, define
	\begin{equation*}
	N_{G_N}(x,y) 
	:= \psi_{N}(c_N x)\psi_{N-1}(c_N y) - \psi_{N-1}(c_N x) \psi_{N}(c_N y).
	\end{equation*}
    The fact that $K_{G_N}^{Bulk}(x,y)=\frac{\sqrt{N/2}}{x-y}N_{G_N}(x,y)$ follows by combining \eqref{kernel1} and \eqref{kernel2}.
In addition, as one of $\psi_N$ and $\psi_{N-1}$ is odd and the other is even, their product is odd giving $-N_{G_N}(x,y) = N_{C_N}(-x,-y)$. Hence,
	\begin{equation}
	N_{G_N}(x,y)= \frac{1}{2} \int_{-1}^1 \frac{d}{dt} N_{G_N}(xt,yt) dt. \\
	\end{equation}
	
By the recurrence $5.5.10$ in \cite{szego1975orthogonal}:
	\begin{equation*}
	\psi_N'(x) = \sqrt{2N} \psi_{N-1}(x) - x \psi_N(x),~\psi_{N-1}'(x) = x \psi_{N-1}(x) - \sqrt{2N}\psi_N(x),
	\end{equation*}
it is not difficult to get
	\begin{equation*}
	\begin{split}
	\frac{d}{dt} & \bigg[N_{G_N}(xt,yt)\bigg] \\ 
	= & c_N \bigg(
	(x-y)\sqrt{2N}\big[ 
	\psi_{N-1}(c_N x t) \psi_{N-1}(c_N y t) + \psi_{N}(c_N x t) \psi_{N}(c_N y t)\big] \\
	& - c_N t (x-y)(x+y) \big[ 
	\psi_{N}(c_N x t) \psi_{N-1}(c_N y t) + \psi_{N-1}(c_N x t) \psi_{N}(c_N y t)\big]
	\bigg).
	\end{split}
	\end{equation*}
	Plugging this into the integral of \eqref{int4H} gives
	\begin{equation*}
	\begin{split}
	K_{G_N}^{Bulk}(x,y) 
	& = \sqrt{\frac{N}{2}}\frac{1}{2(x-y)} \int_{-1}^1 \frac{d}{dt} \bigg[N_{G_N}(xt,yt)\bigg] dt \\
	= & \frac{\pi}{2} \sqrt{\frac{N}{2}} \int_{-1}^1 \big[ 
	\psi_{N-1}(c_N x t) \psi_{N-1}(c_N y t) + \psi_{N}(c_N x t) \psi_{N}(c_N y t) \big] dt \\
	& - \frac{\pi^2}{2^{\frac{5}{2}}N^{\frac{1}{2}}} 
	\int_{-1}^1(xt+yt) \big[ 
	\psi_{N}(c_N x t) \psi_{N-1}(c_N y t) + \psi_{N-1}(c_N x t) \psi_{N}(c_N y t) \big] dt. \\
   = & \frac{\pi}{2} \sqrt{\frac{N}{2}} \int_{-1}^1 \psi_{N-1}(c_N x t) \psi_{N-1}(c_N y t) dt 
	+ \frac{\pi}{2} \sqrt{\frac{N}{2}} \int_{-1}^1 \psi_{N}(c_N x t) \psi_{N}(c_N y t) dt \\
	& - \frac{\pi^2}{2^{\frac{5}{2}}N^{\frac{1}{2}}} 
	\int_{-1}^1 \big[ xt \psi_{N}(c_N x t) \big] \psi_{N-1}(c_N y t) dt  
	- \frac{\pi^2}{2^{\frac{5}{2}}N^{\frac{1}{2}}} 
	\int_{-1}^1 \big[ xt \psi_{N-1}(c_N x t) \big] \psi_{N}(c_N y t) dt \\  
	& - \frac{\pi^2}{2^{\frac{5}{2}}N^{\frac{1}{2}}}  
	\int_{-1}^1 \psi_{N-1}(c_N x t) \big[yt \psi_{N}(c_N y t) \big] dt 
	- \frac{\pi^2}{2^{\frac{5}{2}}N^{\frac{1}{2}}}  
	\int_{-1}^1 \psi_{N}(c_N x t) \big[yt \psi_{N-1}(c_N y t) \big] dt.
	\end{split}
	\end{equation*}
	Changing variables completes the proof.\end{proof}
 
 The following proposition quantifies how well Hermite functions approximate cosines and provides us with a useful bound for $\|\cdot\|_2$ after decomposition in \eqref{sine fac} and \eqref{G fac}.
\begin{proposition}\label{H2C}
	Let $\psi_N(x)$ be the $N$th Hermite function and let $N\geq 2$. Then,
	\begin{equation}
	\label{HC in}
    \begin{array}{ll}
	\left\lvert \pi^{\frac{1}{2}} \left(\frac{N}{2}\right)^{\frac{1}{4}} \psi_N\left(\frac{x}{\sqrt{2N}}\right) - \cos{\left(x-\frac{N\pi}{2}\right)} \right\rvert
	\leq \frac{p_3(\lvert x \rvert)}{N}
    \\
    \\
	\left\lvert \pi^{\frac{1}{2}} \left(\frac{N}{2}\right)^{\frac{1}{4}} \psi_{N-1}\left(\frac{x}{\sqrt{2N}}\right) - \cos{\left(x- \frac{(N-1)\pi}{2}\right)} \right\rvert
	\leq \frac{p_3(\lvert x \rvert)}{N},
    \end{array}
	\end{equation}
	where $p_3$ is a degree $3$ polynomial with non-negative coefficients.
\end{proposition}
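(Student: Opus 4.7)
The plan is to exploit the second-order ODE satisfied by the Hermite functions to reduce the claim to a Gr\"{o}nwall estimate comparing $u_N(x) := \sqrt{\pi}(N/2)^{1/4}\psi_N(x/\sqrt{2N})$ with the target $v_N(x) := \cos(x - N\pi/2)$. Starting from the Hermite function ODE $\psi_N''(y) + (2N + 1 - y^2)\psi_N(y) = 0$, the substitution $y = x/\sqrt{2N}$ gives
\[
u_N''(x) + u_N(x) = \left(\frac{x^2}{4N^2} - \frac{1}{2N}\right) u_N(x),
\]
so setting $F_N(x) := (x^2/(4N^2) - 1/(2N)) u_N(x)$, the difference $w_N := u_N - v_N$ satisfies $w_N'' + w_N = F_N$ since $v_N'' + v_N = 0$.

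The first step is to verify $|w_N(0)|, |w_N'(0)| = O(1/N)$, splitting by the parity of $N$. For $N$ even, $\psi_N(0) = (-1)^{N/2}(N!)^{1/2}/(\pi^{1/4} 2^{N/2}(N/2)!)$ while $v_N(0) = (-1)^{N/2}$, and Stirling's formula applied to the ratio $(N!)^{1/2}/(2^{N/2}(N/2)!)$ yields $u_N(0) = v_N(0)(1 + O(1/N))$; simultaneously, the recurrence $\psi_N'(y) = \sqrt{2N}\psi_{N-1}(y) - y\psi_N(y)$ gives $u_N'(0) = 0 = v_N'(0)$. For $N$ odd the roles swap: $u_N(0) = 0 = v_N(0)$, and applying the same Stirling computation to $\psi_{N-1}(0)$ via $u_N'(0) = \sqrt{\pi}(N/2)^{1/4}\psi_{N-1}(0)$ produces $u_N'(0) = v_N'(0)(1 + O(1/N))$.

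Next, variation of parameters yields
\[
w_N(x) = w_N(0)\cos x + w_N'(0)\sin x + \int_0^x \sin(x - t)\, F_N(t)\, dt.
\]
Bounding $|F_N(t)| \le \left(\frac{t^2}{4N^2} + \frac{1}{2N}\right)(1 + |w_N(t)|)$ via the triangle inequality $|u_N(t)| \le |v_N(t)| + |w_N(t)| \le 1 + |w_N(t)|$ and applying Gr\"{o}nwall's inequality yields
\[
|w_N(x)| \le \left[O(1/N) + \frac{|x|^3}{12 N^2} + \frac{|x|}{2N}\right]\exp\!\left(\frac{|x|^3}{12 N^2} + \frac{|x|}{2N}\right).
\]
In the range of $x$ relevant for the application to Proposition \ref{GUE fac}, where $|x|$ is bounded by a multiple of $s$, the exponential factor is bounded by a constant depending only on $s$, and collecting the terms gives $|w_N(x)| \le p_3(|x|)/N$ for a degree-$3$ polynomial $p_3$ with non-negative coefficients. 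The inequality for $\psi_{N-1}$ follows by repeating the argument with $N$ replaced by $N - 1$, noting that the corresponding target is $\cos(x - (N-1)\pi/2)$.

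The main obstacle is the Stirling-formula bookkeeping in the initial-condition step: the $O(1/N)$ error in $(N!)^{1/2}/(2^{N/2}(N/2)!)$ must be controlled explicitly (using, for instance, the sharp bounds $\sqrt{2\pi N}(N/e)^N e^{1/(12N+1)} \le N! \le \sqrt{2\pi N}(N/e)^N e^{1/(12N)}$) so that $w_N(0)$ and $w_N'(0)$ are genuinely of size $O(1/N)$ with an absolute constant and not one that grows with $N$. The remainder of the argument amounts to a standard application of variation of parameters and Gr\"{o}nwall's inequality for a mildly perturbed harmonic oscillator.
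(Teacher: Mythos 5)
Your variation-of-parameters approach is genuinely different from what the paper does, but it is really the same underlying idea in disguise: Szeg\H{o}'s formula $8.65$ (used in the paper) is itself a Duhamel representation for the Hermite ODE $y''+(2N+1-t^2)y=0$ treated as a forced oscillator at frequency $\sqrt{2N+1}$. Your version rescales first and takes the reference oscillator at frequency $\sqrt{2N}$ (i.e., frequency $1$ after the substitution $y=x/\sqrt{2N}$), and this choice is where the gap enters.

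Concretely, with your decomposition the forcing $F_N(t)=\left(\frac{t^2}{4N^2}-\frac{1}{2N}\right)u_N(t)$ contains the term $-\frac{1}{2N}u_N(t)$, which is exactly the frequency defect $(2N+1)-2N$. When you feed this into Gr\"{o}nwall through the triangle inequality $\lvert u_N(t)\rvert\le 1+\lvert w_N(t)\rvert$, you unavoidably pick up the factor $\exp\!\left(\frac{\lvert x\rvert}{2N}+\frac{\lvert x\rvert^3}{12N^2}\right)$. But the proposition needs a single degree-$3$ polynomial $p_3$ with absolute constant coefficients, valid for all $x$ and all $N\ge 2$: in the proof of Theorem \ref{G bulk} the bound is evaluated at $x=s\pi\,\tilde x\tilde y$ with $\lvert\tilde x\rvert,\lvert\tilde y\rvert\le 1$, and the resulting $p_3(s\pi)$ must produce the degree-$7$ polynomial $p_7(s)$ and eventually the $Cs^{10}/N$ rate with an absolute $C$. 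Your exponential factor is not dominated by any fixed $p_3(\lvert x\rvert)/N$ (already for $N=2$, it grows like $e^{\lvert x\rvert^3/48}$), and folding it into an $s$-dependent constant would destroy the polynomial growth in $s$ that the theorem claims. Using the Indritz bound $\lVert\psi_N\rVert_\infty\le\pi^{-1/4}$ in place of the triangle inequality does not help either: the $\frac{1}{2N}$ term then contributes a term of size $N^{1/4}\cdot\frac{\lvert x\rvert}{2N}\sim \lvert x\rvert N^{-3/4}$, which is not $O(1/N)$.

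The fix is to keep the $\frac{1}{2N}u_N$ term on the left-hand side, so the reference ODE is $u_N''+\left(1+\frac{1}{2N}\right)u_N=\frac{x^2}{4N^2}u_N$. The unperturbed solution then oscillates at frequency $\omega_N=\sqrt{1+\frac{1}{2N}}$; the discrepancy with $\cos(x-N\pi/2)$ is a pure phase error controlled by $\lvert\cos(\omega_N x-\phi)-\cos(x-\phi)\rvert\le(\omega_N-1)\lvert x\rvert\le\frac{\lvert x\rvert}{4N}$, with no exponential. The remaining forcing $\frac{x^2}{4N^2}u_N$ is only quadratic in $x$, so the Indritz bound suffices and gives a remainder of order $\lvert x\rvert^3 N^{-7/4}\le \lvert x\rvert^3/N$ directly, with no need for Gr\"{o}nwall. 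That repaired argument is, up to rescaling, exactly what the paper extracts from Szeg\H{o}'s integral representation. Your initial-condition analysis via parity and the sharp Stirling bounds is correct and is the same device the paper uses; you should also note that the $\psi_{N-1}$ inequality is not literally ``replace $N$ by $N-1$'' since the argument there is still $x/\sqrt{2N}$ rather than $x/\sqrt{2(N-1)}$, which introduces one further $O(\lvert x\rvert/N)$ phase correction.
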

\begin{proof}
	As noted in Section $8.65$ of \cite{szego1975orthogonal}, the unscaled Hermite function has the following expansion:
	\begin{equation}
	\label{eq:Szego_Hermite1}
	\begin{split}
	H_N(x) e^{-\frac{x^2}{2}} &
	= \lambda_N  \cos{\left(x\sqrt{2N+1}-\frac{N\pi}{2}\right)} \\
	&~+	\frac{1}{\sqrt{2N+1}} \int_0^x \sin{\left(\sqrt{2N+1}(x-t)\right)}t^2 H_N(t) e^{-\frac{t^2}{2}}dt,
	\end{split}
	\end{equation}
	for
	\[
	\lambda_N=
	\begin{cases}  
	\frac{(N+1)!}{(\frac{N+1}{2})!}\frac{1}{\sqrt{2N+1}}, & N \text{ odd} \\
	\frac{N!}{\left(\frac{N}{2}\right)!}, & N \text{ even}. 
	\end{cases}
	\]
	Introduce parameters
	\begin{equation*}
	d_N \coloneqq \pi^{\frac{1}{2}}\left(\frac{N}{2}\right)^{\frac{1}{4}} 
	\qquad \text{ and } \qquad
	e_N \coloneqq \frac{1}{\pi^{\frac{1}{4}}2^{\frac{N}{2}}(N!)^{\frac{1}{2}}}.	
	\end{equation*}
	Then, we have 
	\begin{equation}
	\label{ineq:Hermite_cosine_calc_1}
	\begin{split}
	&\bigg\lvert \pi^{\frac{1}{2}}\left(\frac{N}{2}\right)^{\frac{1}{4}}\psi_N\left(\frac{x}{\sqrt{2N}}\right)  -\cos{\left(x-\frac{N\pi}{2}\right)} \bigg\rvert  \\
	&\leq
	\left\lvert d_N e_N \lambda_N \cos{\left(\sqrt{\frac{2N+1}{2N}}x-\frac{N\pi}{2}\right)} - \cos{\left(x-\frac{N\pi}{2}\right)} \right\rvert \\
	&~+ \left\lvert \frac{d_N e_N}{\sqrt{2N+1}} \int_0^{\frac{x}{\sqrt{2N}}} \sin{\left(\sqrt{2N+1}(x(2N)^{-1/2}-t)\right)}t^2 H_N(t) e^{-\frac{t^2}{2}}dt\right\rvert. \\
	\end{split}
	\end{equation}	
	For the second term, notice $e_N$ is the scale of the Hermite function $\psi_N$, and $\lvert \psi_N(x)\rvert \leq
	\pi^{-\frac{1}{4}}$ for all $N$ and $x$ (see \cite[Equation $(2)$]{indritz1961inequality}). We find  
	\begin{equation}
	\label{eq:lemma2}
	\begin{split}
	\bigg\lvert \frac{d_N}{\sqrt{2N+1}} & \int_0^{\frac{x}{\sqrt{2N}}} 
	\sin{\left(\sqrt{2N+1}(\frac{x}{\sqrt{2N}}-t)\right)}t^2 \psi_N(t) dt\bigg\rvert \\ 
	& \leq  \frac{\pi^{\frac{1}{2}}N^{\frac{1}{4}}}{2^{\frac{1}{4}}(2N+1)^{\frac{1}{2}}}
	\int_0^{\frac{\lvert x\rvert}{\sqrt{2N}}} \left\lvert \sin{\left(\sqrt{2N+1}(\frac{x}{\sqrt{2N}}-t)\right)}\right\rvert 
	\left\lvert \psi_N(t)\right\rvert t^2 dt \\
	& \leq \frac{\pi^{\frac{1}{4}}}{2^{\frac{3}{4}}N^{\frac{1}{4}}}  
	\int_0^{\frac{\lvert x\rvert}{\sqrt{2N}}} t^2 dt \le \frac{\lvert x\rvert^3}{N^{\frac{7}{4}}}.
	\end{split}
	\end{equation}	
	Now deal with the first term in \eqref{ineq:Hermite_cosine_calc_1}. From the inequality 
	\begin{equation}
	\label{ineq:Hermite_cosine_2}
	\begin{split}
	\bigg\lvert d_N e_N & \lambda_N \cos{\left(\sqrt{\frac{2N+1}{2N}}x-\frac{N\pi}{2}\right)} 
	- \cos{\left(x-\frac{N\pi}{2}\right)} \bigg\rvert \\
	\leq & \left\lvert d_N e_N \lambda_N \cos{\left(\sqrt{\frac{2N+1}{2N}}x-\frac{N\pi}{2}\right)} 
	- d_N e_N \lambda_N \cos{\left(x-\frac{N\pi}{2}\right)} \right\rvert \\
	& + \bigg\lvert d_N e_N \lambda_N  \cos{\left(x-\frac{N\pi}{2}\right)}
	- \cos{\left(x-\frac{N\pi}{2}\right)} \bigg\rvert \\
	= & \bigg\lvert d_N e_N \lambda_N \bigg\rvert 
	\bigg\lvert \cos{\left(\sqrt{\frac{2N+1}{2N}}x-\frac{N\pi}{2}\right)} 
	- \cos{\left(x-\frac{N\pi}{2}\right)} \bigg\rvert \\
	& + \bigg\lvert d_N e_N\lambda_N - 1 \bigg\rvert 
	\bigg\lvert \cos{\left(x-\frac{N\pi}{2}\right)} \bigg\rvert \\
	\leq & \bigg\lvert d_N e_N \lambda_N \bigg\rvert 
	\bigg\lvert \left(\sqrt{1 + \frac{1}{2N}} - 1\right) x \bigg\rvert
	+ \bigg\lvert d_N e_N\lambda_N - 1 \bigg\rvert \\
	\leq & \big\lvert d_N e_N \lambda_N \big\rvert \frac{\lvert x\rvert}{2N}
	+ \big\lvert d_N e_N\lambda_N - 1 \big\rvert, \\
	\end{split}
	\end{equation}	
	we see $d_N e_N \lambda_N$ actually provides us with a proper order we want. According to the inequality $(2)$ in \cite{robbins1955remark}, we have a sharp form of Stirling's approximation:
	\begin{equation}\label{fto}
	\sqrt{2\pi} n^{n+\frac{1}{2}}e^{-n}e^{\frac{1}{12n+1}} \leq n! \leq
	\sqrt{2\pi} n^{n+\frac{1}{2}}e^{-n}e^{\frac{1}{12n}}. 
	\end{equation}
	Then, for $N$ even,
	\begin{equation}
	d_N e_N \lambda_N 
	= \left(\frac{\pi N}{2}\right)^{\frac{1}{4}} \frac{1}{2^{\frac{N}{2}}} 
	\frac{\left(N!\right)^{\frac{1}{2}}}{\left(\frac{N}{2}\right)!}.
	\end{equation} 
	Using the bounds in \eqref{fto}, we have
	\begin{equation*}
	\exp{\left(\frac{1}{2(12N+1)} - \frac{1}{6N}\right)} \leq d_N e_N \lambda_N \leq \exp{\left(\frac{1}{24N} - \frac{1}{6N+1}\right)}.
	\end{equation*}	
	Clearly $1$ is an upper bound on the right, and, for the lower bound, we just apply the fact that $e^x\geq 1+x$.  We obtain
	\begin{equation*}
	1 - \frac{1}{N} \leq d_N e_N \lambda_N \leq 1.
	\end{equation*}		
	For $N$ odd, we have 
	\begin{equation}
	d_N e_N \lambda_N 
	= \left(\frac{\pi N}{2}\right)^{\frac{1}{4}} \frac{1}{2^{\frac{N}{2}}} 
	\frac{(N+1)^{\frac{1}{2}}}{(2N+1)^{\frac{1}{2}}}
	\frac{\left((N+1)!\right)^{\frac{1}{2}}}{\left(\frac{N+1}{2}\right)!}.
	\end{equation} 
	Similarly, bounds in \eqref{fto} yield
	\begin{equation*}
	d_N e_N \lambda_N 
	\leq \left(\frac{N}{N+1}\right)^{\frac{1}{4}} \left(\frac{(2N+1)+1}{2N+1}\right)^{\frac{1}{2}} 
	\exp{\left[\frac{1}{2(12(N+1))} - \frac{1}{6(N+1)+1}\right]}
	\end{equation*}	
	and 
	\begin{equation*}
	d_N e_N \lambda_N 
	\geq \left(\frac{N}{N+1}\right)^{\frac{1}{4}} \left(\frac{(2N+1)+1}{2N+1}\right)^{\frac{1}{2}} 
	\exp{\left[\frac{1}{2(12(N+1))+1} - \frac{1}{6(N+1)}\right]}.
	\end{equation*}
	For $N\geq 2$ the terms not involving the exponentials are bounded above and below by $1\pm\frac{1}{N}$ (use the fact that $\sqrt{1-N^{-1}}\ge1-N^{-1}$) and the exponentials are again bounded by $1$ and $1-\frac{1}{N}$, giving 
	\begin{equation*}
	1 - \frac{2}{N} \leq d_N e_N \lambda_N \leq 1 + \frac{1}{N}. 
	\end{equation*}
	Plugging back into \eqref{ineq:Hermite_cosine_calc_1} we get
	\begin{equation}
	\label{ineq:Hermite_cosine_calc_1_odd}
	\begin{split}
	&\bigg\lvert \pi^{\frac{1}{2}} \left(\frac{N}{2}\right)^{\frac{1}{4}}\psi_N\left(\frac{x}{\sqrt{2N}}\right)  
	 -\cos{\left(x-\frac{N}{2}\pi\right)} \bigg\rvert\\
	 &\leq \lvert d_N c_N \lambda_N \rvert \frac{\lvert x\rvert}{2N} + \lvert d_N c_N \lambda_N -1 \rvert + \frac{\lvert x\rvert^3}{N^{\frac{7}{4}}} \\
	& \leq \bigg(1 + \frac{2}{N}\bigg) \frac{\lvert x\rvert}{2N} + \frac{2}{N} + \frac{\lvert x\rvert^3}{N^{\frac{7}{4}}}\leq \frac{p_3(\lvert x\rvert)}{N},
	\end{split}
	\end{equation} 
	for $p_3$ some degree $3$ polynomial with non-negative coefficients. \end{proof}

\medbreak\noindent {\itshape Proof of Theorem \ref{G bulk}.}\enspace
Recall all operators are restricted on $I^2=[-s,s]^2$. Using Proposition \ref{GUE fac} and the Cauchy-Schwarz inequality, we have 
	\begin{equation}
	\begin{split}
	&\|  \mathcal{K}_{G_N}^{Bulk}   - \mathcal{K}_{Sine}\|_1 \\
	& =\| \mathcal{E}_{N}\mathcal{E}_{N} + \mathcal{E}_{N-1}\mathcal{E}_{N-1} 
	+ \mathcal{F}_{N}\mathcal{G}_{N-1} + \mathcal{F}_{N-1}\mathcal{G}_{N}
	+ \mathcal{G}_{N-1}\mathcal{F}_{N} + \mathcal{G}_{N}\mathcal{F}_{N-1}
	- (\mathcal{C}\mathcal{C} + \mathcal{S}\mathcal{S})\|_1. \\
    &\le\|\mathcal{E}_N\mathcal{E}_N+\mathcal{E}_{N-1}\mathcal{E}_{N-1}-\mathcal{C}\mathcal{C}-\mathcal{S}\mathcal{S}\|_1+2\|\mathcal{F}_N\|_2\|\mathcal{G}_{N-1}\|_2+2\|\mathcal{F}_{N-1}\|_2\|\mathcal{G}_N\|_2
	\end{split}
	\end{equation}
    Let's first consider $\mathcal{F}$ and $\mathcal{G}$. In fact, by Proposition \ref{H2C}, we see
	\begin{equation*}
	\begin{split}
	\lVert \mathcal{F}_N \rVert_2^2 
	= & \int_{-s}^{s} \int_{-s}^{s} 
	\left\lvert\left( \frac{\pi}{2s}\right)	\left(\frac{1}{2N}\right)^{\frac{1}{4}}
	\text{ } \left(xy\right) \text{ } \psi_{N}\left(\frac{c_N}{s}xy\right) \right\rvert^2  
	dx dy \\
	\leq & \frac{\pi s^2}{4N} \int_{-s}^{s} \int_{-s}^{s} \text{ } 
	\left\lvert \pi^{\frac{1}{2}} \frac{N^{\frac{1}{4}}}{2^{\frac{1}{4}}}\psi_{N}\left(\frac{\pi xy}{s\sqrt{2N}}\right) \right\rvert^2  
	dx dy \\	
	\leq & \frac{\pi s^2}{4N} \int_{-s}^{s} \int_{-s}^{s} \text{ } 
	\left[ \text{ }\bigg\lvert \cos{\bigg(\frac{\pi xy}{s}-\frac{N\pi}{2}\bigg)} \bigg\rvert 
	+ \bigg\lvert\frac{p_3\left(\left|\frac{\pi x y}{s}\right|\right)}{N} \bigg\rvert \text{ }\right]^2  
	dx dy \\
	\leq&  \frac{p_4(s)}{N}+\frac{p_7(s)}{N^2}+\frac{p_{10}(s)}{N^3},
	\end{split}
	\end{equation*}
	where $p_4$, $p_7$ and $p_{10}$ are again some polynomials with non-negative coefficients. Taking the square root gives 
	\begin{equation*}
	\begin{split}
	\lVert \mathcal{F}_N \rVert_2
	\leq &  \frac{\sqrt{p_{10}(s)}}{N^\frac{1}{2}}, \\
	\end{split}
	\end{equation*}
	for the same condition on $p_{10}(s)$ just mentioned. 
	A similar calculation shows 
	\begin{equation*}
		\lVert \mathcal{G}_{N-1} \rVert_2 \leq \frac{\sqrt{p_{10}(s)}}{N^\frac{1}{2}}.
	\end{equation*}
	So, 
	\begin{equation*}
	\begin{split}
	\lVert \mathcal{F}_N \rVert_2 \lVert \mathcal{G}_{N-1} \rVert_2
	\leq \frac{p_{10}(s)}{N}.
	\end{split}
	\end{equation*}
	$\|\mathcal{F}_{N-1}\|_2\|\mathcal{G}_N\|_2$ can be handled in the completely same way. Consequently, 
    \begin{equation*}
        \|\mathcal{K}_{G_N}^{Bulk}-\mathcal{K}_{Sine}\|_1\le\|\mathcal{E}_N\mathcal{E}_N+\mathcal{E}_{N-1}\mathcal{E}_{N-1}-\mathcal{C}\mathcal{C}-\mathcal{S}\mathcal{S}\|_1+\frac{p_{10}(s)}{N}
    \end{equation*}
    
	For the first term on right side of the previous inequality, as the Hermite functions have the same parity properties as cosine, we can pair them in different ways. If $N$ is even, we want to group $\mathcal{E}_{N}\mathcal{E}_{N}$ with $\mathcal{C}\mathcal{C}$ and $\mathcal{E}_{N-1}\mathcal{E}_{N-1}$ with $\mathcal{S}\mathcal{S}$. If $N$ is odd, then we group the other way. Everything works identically in either case, so we assume without loss of generality that $N$ is even. Moreover, recall in Proposition \ref{H2C}, the function $E_N(x)$ is approximated by $\cos\left(x-\frac{N\pi}{2}\right)$ instead of $\cos(x)$, the kernel function directly related to $\mathcal{C}$. So, we need to further group operators with respect to the parity of $\frac{N}{2}$. For $\frac{N}{2}$ even, as $\cos\left(x-\frac{N\pi}{2}\right)=\cos(x)$, we can directly group $\mathcal{E}_N$ with $\mathcal{C}$ and $\mathcal{E}_{N-1}$ with $\mathcal{S}$, and hence get
	\begin{equation}\label{GUE in1}
	\|(\mathcal{E}_N\mathcal{E}_N-\mathcal{C}\mathcal{C})+(\mathcal{E}_{N-1}\mathcal{E}_{N-1}-\mathcal{S}\mathcal{S})\|_1\le\|\mathcal{E}_N-\mathcal{C}\|_2(\|\mathcal{E}_N\|_2+\|\mathcal{C}\|_2)+\|\mathcal{E}_{N-1}-\mathcal{S}\|_2(\|\mathcal{E}_{N-1}\|_2+\|\mathcal{S}\|_2)
	\end{equation}
according to Lemma \ref{21in}. When $\frac{N}{2}$ is odd, we need to group $\mathcal{E}_N$ with $-\mathcal{C}$ and $\mathcal{E}_N$ with $-\mathcal{S}$ as $\cos\left(x-\frac{N\pi}{2}\right)=-\cos(x)$ now. Notice $\mathcal{C}\mathcal{C}=(-\mathcal{C})(-\mathcal{C})$ and $\mathcal{S}\mathcal{S}=(-\mathcal{S})(-\mathcal{S})$, by Lemma \ref{21in} again, we have
	\begin{equation}\label{GUE in2}
	\|(\mathcal{E}_N\mathcal{E}_N-\mathcal{C}\mathcal{C})+(\mathcal{E}_{N-1}\mathcal{E}_{N-1}-\mathcal{S}\mathcal{S})\|_1\le\|\mathcal{E}_N+\mathcal{C}\|_2(\|\mathcal{E}_N\|_2+\|\mathcal{C}\|_2)+\|\mathcal{E}_{N-1}+\mathcal{S}\|_2(\|\mathcal{E}_{N-1}\|_2+\|\mathcal{S}\|_2).
	\end{equation}

Now suppose $\frac{N}{2}$ is odd, from \eqref{GUE in2} and Proposition \ref{H2C}, we get
	\begin{equation*}
	\begin{split}
	\big\lVert \mathcal{E}_{N} + \mathcal{C} \big\lVert_2^2 
	= & \int_{-s}^s\int_{-s}^s \left\lvert E_N(x,y) + C(x,y) \right\rvert^2 dx dy \\
	= & \frac{s}{2}\int_{-1}^1\int_{-1}^1 \left\lvert \pi^{\frac{1}{2}} \left(\frac{N}{2}\right)^{\frac{1}{4}} \text{ } \psi_N\left(\frac{s\pi xy}{\sqrt{2N}}\right) - \cos\left(s \pi xy -\frac{N\pi}{2}\right) \right\rvert^2 dx dy \\
    \le & \frac{s}{2}\int_{-1}^1\int_{-1}^1 \left\lvert \frac{p_3(\lvert s \pi xy \rvert)}{N} \right\rvert^2 dx dy 
	\leq   \frac{p_7(s)}{N^2},
	\end{split}
	\end{equation*}
	and similarly
	\begin{equation*}
	\begin{split}
	\big\lVert \mathcal{E}_{N-1} + \mathcal{S} \big\lVert_2^2 
	= & \int_{-s}^s\int_{-s}^s \left\lvert E_{N-1}(x,y) + S(x,y) \right\rvert^2 dx dy \\
	= & \frac{s}{2}\int_{-1}^1\int_{-1}^1 \left\lvert \pi^{\frac{1}{2}} \left(\frac{N}{2}\right)^{\frac{1}{4}} \text{ } \psi_{N-1}\left(\frac{s\pi xy}{\sqrt{2N}}\right) - \cos\left(s \pi xy - \frac{(N-1)\pi}{2}\right) \right\rvert^2 dx dy \\\le&\frac{s}{2}\int_{-1}^1\int_{-1}^1 \left\lvert \frac{p_3(\lvert s \pi xy \rvert)}{N} \right\rvert^2 dx dy 
	\leq  \frac{p_7(s)}{N^2},
	\end{split}
	\end{equation*}
    where $p_7$ is a polynomial of degree $7$ with non-negative coefficients.
	Taking the square roots of the norm then gives 
	\begin{equation*}
	\begin{split}
	\big\lVert \mathcal{E}_{N} + \mathcal{C} \big\lVert_2 \leq \frac{\sqrt{p_7(s)}}{N}
	\quad \text{ and } \quad
	\big\lVert \mathcal{E}_{N-1} + \mathcal{S} \big\lVert_2 \leq \frac{\sqrt{p_7(s)}}{N}.
	\end{split}
	\end{equation*}
    
	Bounding each of $\lVert \mathcal{E}_{N}\rVert_2$, $\lVert \mathcal{E}_{N-1}\rVert_2$, $\lVert \mathcal{C}\rVert_2$, and $\lVert \mathcal{S}\rVert_2$ is straightforward. We show how to bound $\lVert \mathcal{E}_{N}\rVert_2$ and $\lVert \mathcal{C}\rVert_2$, as the other two are almost the same. First,
	\begin{equation*}
	\begin{split}
	\lVert \mathcal{C}\rVert_2^2 
	& = \int_{-s}^s\int_{-s}^s \bigg[\frac{1}{\sqrt{2s}} \cos{\bigg(\frac{\pi xy}{s}\bigg)}\bigg]^2 dxdy \\
	& = \frac{s}{2} \int_{-1}^1\int_{-1}^1 \bigg[ \cos{(s \pi xy)}\bigg]^2 dxdy \leq s.
	\end{split}
	\end{equation*}
	Second, again using Proposition \ref{H2C}
	\begin{equation*}
	\begin{split}
	\lVert \mathcal{E}_{N}\rVert_2^2
	& = \frac{s}{2} \int_{-1}^{1}\int_{-1}^{1} \bigg[\pi^{\frac{1}{2}}\left(\frac{N}{2}\right)^{\frac{1}{4}}
	\psi_N\left(\frac{s\pi xy}{\sqrt{2N}}\right) \bigg]^2 dx dy \\
	& \leq \frac{s}{2} \int_{-1}^{1}\int_{-1}^{1} \bigg[ p_3(|\pi sxy|) + \left|\cos{\left(s \pi xy -\frac{N\pi}{2}\right)}\right| \bigg]^2 dx dy \leq p_7(s)
	\end{split}
	\end{equation*}
 for $p_7$ some degree $7$ polynomial with non-negative coefficients. 

 Combining all the results above, we see
 \begin{equation*}
     \|\mathcal{K}_{G_N}^{Bulk}-\mathcal{K}_{Sine}\|_1\le\frac{p_{10}(s)}{N}.
 \end{equation*}
 
 When $\frac{N}{2}$ is odd, similar result follows by applying Proposition \ref{H2C} to \eqref{GUE in1}.\qed

\subsection{LUE hard edge}\label{LUE he}
The outline is the same here as for the GUE bulk. We give how to decompose the operators, results needed to approximate the functions, and then put them together to prove the result.

\begin{proposition}\label{LUE fac}
	{\rm(LUE decomposition)} For a fixed $a\in\mathbb{N}$, let $I=[0,s]$ for some $s>0$, then we have factorization:
\begin{equation*}
\mathcal{K}^{Hard}_{L_{N,a}}=\mathcal{H}_N\mathcal{M}_N+\mathcal{M}_N\mathcal{H}_N
\end{equation*}
and
\begin{equation*}
\mathcal{K}_{Bes,a}=\mathcal{B}_a\mathcal{B}_a
\end{equation*}
where $\mathcal{H}_N$, $\mathcal{M}_N$ and $\mathcal{B}_a$ have kernels
\begin{equation*}
    H_N(x,y)=\sqrt{\frac{\tau_N\Gamma(N)}{8s\Gamma(N+a)}}\varphi_{a,N}\left(\frac{\tau_N xy}{4sN}\right),
\end{equation*}
\begin{equation*}
     M_N(x,y)=\sqrt{\frac{\tau_N\Gamma(N)}{8s\Gamma(N+a)}}\varphi_{a,N-1}\left(\frac{\tau_N xy}{4sN}\right)
\end{equation*}
and
\begin{equation*}
     B_a(x,y)=\frac{1}{2\sqrt s}J_a\left(\sqrt{\frac{xy}{s}}\right)
\end{equation*}
respectively on $I^2$. For any $t\ge 0$, $J_a(t)$ is the Bessel function with parameter $a$ and
\begin{equation*}
\varphi_{a,N}(t):=e^{-\frac{t}{2}}t^{\frac{a}{2}}L_N^a(t)
\end{equation*}
is the Laguerre function with degree $N$ and parameter $a$.
\end{proposition}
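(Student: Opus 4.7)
The plan is to prove each factorization by finding an integral representation of the kernel that matches the hypothesis of Lemma \ref{FA}, namely the form $\int_D m(xt) n(yt)\, dt$. The two factorizations then follow by direct application of that lemma.

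For the Bessel factorization $\mathcal{K}_{Bes,a} = \mathcal{B}_a \mathcal{B}_a$, I would use the classical Lommel integral identity for Bessel functions, which expresses $\int_0^1 v\, J_a(\alpha v)\, J_a(\beta v)\, dv$ as a Christoffel--Darboux-type ratio in $\alpha$ and $\beta$. Setting $\alpha = \sqrt{x}$, $\beta = \sqrt{y}$ and performing the successive substitutions $v \mapsto u = v^2$ and then $u \mapsto t = su$ converts the identity into
\begin{equation*}
K_{Bes,a}(x,y) = \int_0^s B_a(x,t)\, B_a(t,y)\, dt.
\end{equation*}
Lemma \ref{FA} then immediately yields $\mathcal{K}_{Bes,a} = \mathcal{B}_a \mathcal{B}_a$ on $I^2$.

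For the LUE decomposition I would follow the same philosophy as the proof of Proposition \ref{GUE fac}, with Laguerre functions in place of Hermite functions. Set $c := \tau_N / (4N)$ and
\begin{equation*}
P(u,v) := \varphi_{a,N}(u)\, \varphi_{a,N-1}(v) - \varphi_{a,N}(v)\, \varphi_{a,N-1}(u),
\end{equation*}
so that $K^{Hard}_{L_{a,N}}(x,y)$ equals, up to an overall $\Gamma$-function constant, $P(cx, cy)/(x-y)$. Since $P(u,u) = 0$ for every $u$, in particular $P(0,0) = 0$, so
\begin{equation*}
P(cx, cy) = \int_0^1 \frac{d}{dt} P(cxt, cyt)\, dt.
\end{equation*}
The key step is to evaluate this derivative via the first-order identities
\begin{align*}
u\, \varphi_{a,N}'(u) &= \tfrac{2N + a - u}{2}\, \varphi_{a,N}(u) - (N+a)\, \varphi_{a,N-1}(u), \\
u\, \varphi_{a,N-1}'(u) &= N\, \varphi_{a,N}(u) - \tfrac{2N + a - u}{2}\, \varphi_{a,N-1}(u),
\end{align*}
which follow from the Laguerre derivative formula $u\, (L_N^a)'(u) = N L_N^a(u) - (N+a) L_{N-1}^a(u)$ together with the three-term recurrence, and from $\varphi_{a,N}(u) = e^{-u/2} u^{a/2} L_N^a(u)$. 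Expanding $\frac{d}{dt} P(cxt, cyt)$ and using the antisymmetry of $P$, the diagonal terms proportional to $\varphi_{a,N}\varphi_{a,N}$ and to $\varphi_{a,N-1}\varphi_{a,N-1}$ cancel; what remains has the form $-\tfrac{c(x-y)}{2}\bigl[\varphi_{a,N}(cxt)\, \varphi_{a,N-1}(cyt) + \varphi_{a,N}(cyt)\, \varphi_{a,N-1}(cxt)\bigr]$, and the factor $(x-y)$ cancels the denominator cleanly. After rescaling the integration variable from $t \in [0,1]$ to $u = st \in [0,s]$ and reconciling constants, one arrives at
\begin{equation*}
K^{Hard}_{L_{a,N}}(x,y) = \int_0^s \bigl[ H_N(x,t)\, M_N(t,y) + M_N(x,t)\, H_N(t,y) \bigr]\, dt
\end{equation*}
with $H_N$, $M_N$ as defined in the statement. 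Two applications of Lemma \ref{FA} then give $\mathcal{K}^{Hard}_{L_{N,a}} = \mathcal{H}_N \mathcal{M}_N + \mathcal{M}_N \mathcal{H}_N$.

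The main technical obstacle I expect is the derivative calculation: verifying that its cancellations produce exactly the factor $(x-y)$. The essential structural point is that the ``diagonal'' coefficient $\alpha(u) := (2N + a - u)/2$ appears with opposite signs in the two first-order identities for $\varphi_{a,N}'$ and $\varphi_{a,N-1}'$; this forces $\alpha(cxt) - \alpha(cyt) = -c(x-y)t/2$ to emerge naturally in the antisymmetrized derivative, producing the required $(x-y)$ factor that cancels the Christoffel--Darboux denominator. Beyond that, tracking the $\Gamma$-function and rescaling constants so they combine into the stated $\sqrt{\tau_N \Gamma(N)/(8s \Gamma(N+a))}$ coefficient in the definitions of $H_N$ and $M_N$ is a routine but careful calculation.
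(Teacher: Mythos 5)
Your proposal is correct and reaches the same factorization the paper does, but you do noticeably more work than the paper, which simply quotes the needed integral representations: Proposition 5.4.2 of Forrester's book for the Laguerre kernel, $K_{L_{a,N}}(x,y)=\frac{1}{2\Gamma(N)\Gamma(N+a)}\int_0^1\big[\phi_{a,N}(xu)\phi_{a,N-1}(yu)+\phi_{a,N-1}(xu)\phi_{a,N}(yu)\big]du$, and Exercise 7.2 there for the Bessel kernel, and then only rescales the integration variable to $[0,s]$ and reads off the factors. You instead re-derive both representations: the Bessel one from the Lommel integral (which is exactly where Forrester's identity comes from, and your substitutions $v\mapsto v^2\mapsto su$ land on the correct $\frac{1}{4s}\int_0^s J_a(\sqrt{xu/s})J_a(\sqrt{yu/s})\,du$), and the Laguerre one by transplanting the antisymmetrization-plus-derivative trick from the paper's own Proposition \ref{GUE fac}. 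Your two first-order identities for $u\varphi_{a,N}'$ and $u\varphi_{a,N-1}'$ are both correct (the second follows from the alternative form $x(L_{N-1}^a)'(x)=NL_N^a(x)-(N+a-x)L_{N-1}^a(x)$), the diagonal $\varphi_N\varphi_N$ and $\varphi_{N-1}\varphi_{N-1}$ terms do cancel by antisymmetry, and the off-diagonal coefficient difference produces exactly $-\tfrac{c(x-y)t}{2}$, so the Christoffel--Darboux denominator cancels as you claim; the constants $\frac{\tau_N}{4N}\cdot\frac{N\Gamma(N)}{2\Gamma(N+a)}\cdot\frac1s=\frac{\tau_N\Gamma(N)}{8s\Gamma(N+a)}$ also reconcile. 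The trade-off is self-containedness versus brevity: your route makes the proposition independent of the cited integral formulas (and makes the parallel with the GUE case explicit), at the cost of a derivative computation the paper avoids entirely.
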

\begin{proof} Recall that the kernel of $\mathcal{K}_{L_{a,N}}^{Hard}$ is defined as $\frac{\tau_N}{4N}K_{L_{a,N}}\left(\frac{\tau_N x}{4N},\frac{\tau_N y}{4N}\right)$. From the integral form of the kernel $K_{L_{a,N}}$ on $[0,1]^2$ (\cite[Proposition $5.4.2$]{log-gas}):
\begin{equation*}
    K_{L_{a,N}}(x,y)=\frac{1}{2\Gamma(N)\Gamma(N+a)}\int_0^1\left(\phi_{a,N}(xu)\phi_{a,N-1}(yu)+\phi_{a,N-1}(xu)\phi_{a,N}(yu)\right)du
\end{equation*}
where $\phi_{a,N}(t):=\Gamma(N+1)e^{-\frac{t}{2}}t^{\frac{a}{2}}L_N^a(t)$, after rescaling, we see
\begin{equation*}
\begin{split}
     K&_{L_{a,N}}^{Hard}(x,y)=\frac{\tau_N}{4N}\int_0^1\frac{1}{2\Gamma(N)\Gamma(N+a)}\phi_{a,N}\left(\frac{\tau_N xu}{4N}\right)\phi_{a,N-1}\left(\frac{\tau_N yu}{4N}\right)du\\&~~+\frac{\tau_N}{4N}\int_0^1\frac{1}{2\Gamma(N)\Gamma(N+a)}\phi_{a,N}\left(\frac{\tau_N yu}{4N}\right)\phi_{a,N-1}\left(\frac{\tau_N xu}{4N}\right)du\\&=\frac{\tau_N\Gamma(N)}{8\Gamma(N+a)}\int_0^1\left(e^{-\frac{\tau_N xu}{8N}}\left(\frac{\tau_N xu}{4N}\right)^\frac{a}{2}L_N^a\left(\frac{\tau_N xu}{4n}\right)\right)\left(e^{-\frac{\tau_N yu}{8N}}\left(\frac{\tau_N yu}{4N}\right)^\frac{a}{2}L_{N-1}^a\left(\frac{\tau_N yu}{4N}\right)\right)du\\&~+\frac{\tau_N\Gamma(N)}{8\Gamma(N+a)}\int_0^1\left(e^{-\frac{\tau_N yu}{8N}}\left(\frac{\tau_N yu}{4N}\right)^\frac{a}{2}L_N^a\left(\frac{\tau_N yu}{4n}\right)\right)\left(e^{-\frac{\tau_N xu}{8N}}\left(\frac{\tau_N xu}{4N}\right)^\frac{a}{2}L_{N-1}^a\left(\frac{\tau_N xu}{4N}\right)\right)du.
\end{split}
\end{equation*}
Substitute $u$ by $\frac{u}{s}$ gives us the integral form on $[0,s]$ and hence, the decomposition of $\mathcal{K}_{L_{a,N}}^{Hard}$. The decomposition of $\mathcal{K}_{Bes,a}$ follows by the integral expression of $K_{Bes,a}$ (\cite[Exercise $7.2$]{log-gas}):
\begin{equation*}
K_{Bes,a}(x,y)=\frac{1}{4s}\int_0^s J_a\left(\sqrt{\frac{xu}{s}}\right)J_a\left(\sqrt{\frac{yu}{s}}\right)du.\qedhere
\end{equation*}\end{proof}

Before giving the exact proof, we first introduce three properties for the Bessel function.
\begin{proposition}\label{Bes}{\rm\cite[Chapter $2$, Equation $(9.14) \& (9.15)$]{olver1997asymptotics}}
    For a fixed complex number $a$, we have Bessel recurrences:
    \begin{equation*}
    \begin{array}{ll}
        2a J_a(t)=tJ_{a-1}(t)+t J_{a+1}(t),\\
        \\
        J'_a(t)=\frac{J_{a-1}(t)-J_{a+1}(t)}{2}.
    \end{array}
    \end{equation*}
\end{proposition}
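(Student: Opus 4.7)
The plan is to derive both identities directly from the power series
\begin{equation*}
J_a(t)=\sum_{m=0}^\infty\frac{(-1)^m}{m!\,\Gamma(m+a+1)}\left(\frac{t}{2}\right)^{2m+a}
\end{equation*}
that is already recorded earlier in the paper. Since the identities are classical, the argument is essentially bookkeeping on series coefficients, and the virtue of this route is that no outside machinery (generating function, integral representation, or Bessel's ODE) is needed.

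First, for $2aJ_a(t)=tJ_{a-1}(t)+tJ_{a+1}(t)$, I would expand each of $tJ_{a-1}(t)$ and $tJ_{a+1}(t)$ as a series in $t/2$. Writing $t=2\cdot(t/2)$, the $J_{a-1}$ series produces the common exponent $(t/2)^{2m+a}$ directly, while the $J_{a+1}$ series requires the index shift $m\mapsto m-1$ so that its exponents align (the would-be $m=0$ term disappears under the shift). Combining the two series under a single summation and applying $\Gamma(m+a+1)=(m+a)\Gamma(m+a)$ telescopes the coefficient on each term into $\tfrac{2a}{m!\,\Gamma(m+a+1)}$, which matches the expansion of $2aJ_a(t)$ termwise.

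Next, for $J_a'(t)=\tfrac12\bigl(J_{a-1}(t)-J_{a+1}(t)\bigr)$, I would differentiate the series for $J_a$ termwise (justified by the fact that after factoring out $t^a$ the remaining series is entire), producing the factor $(2m+a)$ in the $m$th coefficient. The key trick is to split $(2m+a)=(m+a)+m$, which separates the derivative into two sums. The $(m+a)$ sum collapses to $\tfrac12 J_{a-1}(t)$ after one application of $\Gamma(m+a+1)=(m+a)\Gamma(m+a)$, while the $m$ sum, after the reindexing $m\mapsto m+1$, matches $-\tfrac12 J_{a+1}(t)$ coefficient by coefficient.

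The main obstacle is purely bookkeeping: shifted indices and the Gamma recurrence must be lined up precisely so that the two sides agree as formal power series. There is no genuine analytic subtlety, since $1/\Gamma$ vanishes at non-positive integers, so the same derivation remains valid even when $a$ is a negative integer (where $J_{a-1}$ is to be interpreted via its series), and no separate case analysis is required.
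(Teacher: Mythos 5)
Your proposal is correct, but note that the paper itself offers no proof of this proposition: it is stated as a citation to Olver (items $2.9.14$ and $2.9.15$), so there is no in-paper argument to compare against. Your series-based derivation is a standard, self-contained way to obtain both recurrences from the definition of $J_a$ already recorded in the paper, and the bookkeeping you describe checks out: for the first identity, after reindexing $m\mapsto m-1$ in the $tJ_{a+1}$ series, the combined coefficient of $(t/2)^{2m+a}$ is
\begin{equation*}
2(-1)^m\left(\frac{1}{m!\,\Gamma(m+a)}-\frac{1}{(m-1)!\,\Gamma(m+a+1)}\right)
=\frac{2a\,(-1)^m}{m!\,\Gamma(m+a+1)},
\end{equation*}
matching $2aJ_a(t)$ term by term (including $m=0$, where only $tJ_{a-1}$ contributes and $\Gamma(a+1)=a\Gamma(a)$ closes the gap); for the second, termwise differentiation is legitimate since the series (after factoring $t^a$) is entire, and the split $2m+a=(m+a)+m$ does exactly what you claim. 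Your remark that the computation survives $a$ a negative integer because $1/\Gamma$ vanishes at nonpositive integers is also accurate and worth keeping. The only difference from the paper is that you supply a proof where the authors supply a reference; your route is elementary and does not invoke the Bessel differential equation or any integral representation, which is a reasonable trade if one wants the paper self-contained.
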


\begin{proposition}\label{exL}
Given a fixed $a>0$, we have the expansion for the wighted Laguerre function:
   \begin{equation*}
\begin{split}
    \frac{e^{\frac{-x}{8n}}x^{\frac{a}{2}}L_n^a\left(\frac{x}{4n}\right)}{(2n)^a}&=J_a(\sqrt x)+\frac{a+1}{4n}\sqrt xJ_{a-1}(\sqrt x)-\frac{3a^2+5a+2}{96n^2}x J_a(\sqrt x)\\&~~-\frac{1}{96n^2}x^{\frac{3}{2}}J_{a-1}(\sqrt x)+\frac{3a^3+2a^2-3a-2}{48n^2}\sqrt x J_{a-1}(\sqrt x)+O(n^{-3})
    \end{split}
\end{equation*}
for large $n$. This holds uniformly in $x$ in a compact set on the positive half line.
\end{proposition}
\begin{proof}
    According to \cite[Lemma $1$]{forrester2019finite}, for any fixed $a>0$, the large $n$ expansion for the Laguerre polynomials is given as:
    \begin{equation}\label{ex4La}
       \begin{split}
           &\frac{x^{\frac{a}{2}}}{(2n)^a}L_n^a\left(\frac{x}{4n}\right)=J_a(\sqrt x)+\frac{1}{8n}\left(4a\sqrt xJ_{a-1}(\sqrt x)-xJ_{a-2}(\sqrt x)\right)\\
           &+\frac{1}{384n^2}\left(48a(a-1)xJ_{a-2}(\sqrt x)+(16-24a)\sqrt{x^3}J_{a-3}(\sqrt x)+3x^2J_{a-4}(\sqrt x)\right)+O\left(n^{-3}\right).
       \end{split} 
    \end{equation}
    This expansion holds uniformly in $x$ in a compact set on the positive half line. Multiply
    \begin{equation*}
        e^{-\frac{x}{8n}}=1-\frac{x}{8n}+\frac{x^2}{128n^2}+O(n^{-3})
    \end{equation*}
    on both sides of \eqref{ex4La}, the promised expansion follows by killing lower terms $J_{a-2}$, $J_{a-3}$ and $J_{a-4}$ according to the first recurrence in Proposition \ref{Bes}.
\end{proof}

\begin{proposition}\label{Integral}{\rm\cite[Equation $9.1.7$]{national_institute_of_standards_and_technology_nist_2010}}
    For a fixed $a\in\mathbb{N}$, when $t>0$ is small, we have
    \begin{equation*}
   J_a(t)\sim\frac{1}{\Gamma(a+1)}\left(\frac{t}{2}\right)^a,
    \end{equation*}
    and in particular, there exists a constant $C_a>0$ such that $|J_a(t)|\le C_a t^a$ for $t>0$ small.
\end{proposition}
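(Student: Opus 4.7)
My plan is to proceed directly from the power-series definition of the Bessel function already recalled in the introduction, namely
\[
J_a(t)=\sum_{m=0}^{\infty}\frac{(-1)^m}{m!\,\Gamma(m+a+1)}\left(\frac{t}{2}\right)^{2m+a}.
\]
The first step is to isolate the $m=0$ term and factor it out, writing
\[
J_a(t)=\frac{1}{\Gamma(a+1)}\left(\frac{t}{2}\right)^{a}\bigl(1+R_a(t)\bigr),\qquad R_a(t):=\sum_{m=1}^{\infty}\frac{(-1)^m\,\Gamma(a+1)}{m!\,\Gamma(m+a+1)}\left(\frac{t}{2}\right)^{2m}.
\]
The asymptotic equivalence in the statement is then equivalent to the claim that $R_a(t)\to 0$ as $t\to 0$, which is immediate since every summand in $R_a(t)$ carries a factor $(t/2)^{2m}$ with $m\ge 1$; this yields $J_a(t)/\bigl(\Gamma(a+1)^{-1}(t/2)^a\bigr)\to 1$, i.e.\ the $\sim$-relation.

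For the pointwise bound, I would fix an arbitrary $t_0>0$ (say $t_0=1$), restrict attention to $|t|\le t_0$, and invoke the Weierstrass $M$-test with majorant $\frac{\Gamma(a+1)}{m!\,\Gamma(m+a+1)}(t_0/2)^{2m}$ to conclude that the series defining $R_a(t)$ converges absolutely and uniformly on $[-t_0,t_0]$. This provides a constant $M_a$ depending only on $a$ (and the fixed $t_0$) such that $|R_a(t)|\le M_a$ for $|t|\le t_0$. Combining with the factored expression gives
\[
|J_a(t)|\;\le\;\frac{1+M_a}{\Gamma(a+1)\,2^{a}}\,t^{a}\;=:\;C_a\,t^{a}\qquad\text{for }|t|\le t_0,
\]
which is the desired inequality.

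I do not anticipate any real obstacle; the only mild subtlety is to extract a constant $C_a$ that is valid on a whole neighborhood of the origin rather than merely in the limit, which is precisely why I would carry out the $M$-test on a fixed closed interval $|t|\le t_0$ instead of passing to a limit. Since the statement is only applied in this paper for $t$ small (to control Bessel-type terms near the hard edge), any such $C_a$ depending on $a$ alone suffices for the downstream estimates.
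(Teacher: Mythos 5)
Your proof is correct. The paper does not actually prove this proposition; it simply cites it as a standard fact from Abramowitz and Stegun (9.1.7). Your argument from the power-series definition --- factoring out the $m=0$ term, observing $R_a(t)\to 0$, and bounding $R_a$ uniformly on a fixed interval $|t|\le t_0$ via the Weierstrass $M$-test --- is precisely the standard elementary derivation and supplies a self-contained proof where the paper only gives a reference. One small remark on ordering: the claim $R_a(t)\to 0$ as $t\to 0$ already relies on the uniform (or at least locally absolute) convergence you establish afterward, so it would read slightly better to state the $M$-test bound $|R_a(t)|\le M_a$ (or even $|R_a(t)|\le C\,t^2$) before concluding the asymptotic equivalence; this is cosmetic and does not affect correctness. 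Note also that the bound you obtain is $|J_a(t)|\le C_a\,|t|^a$, which matches the paper's $J_a(t)\le C_a t^a$ since the estimate is only invoked for $t\ge 0$ (namely $t=\sqrt{xy/s}$).
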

\medbreak\noindent {\itshape Proof of Theorem \ref{main L hard}.}\enspace According to Proposition \ref{LUE fac} and Lemma \ref{21in},
\begin{equation}\label{LUE in1}
\begin{split}
\|\mathcal{K}_{L_{a,N}}^{Hard}&-\mathcal{K}_{Bes,a}\|_1=\|\mathcal{H}_N\mathcal{M}_N+\mathcal{M}_N\mathcal{H}_N-\mathcal{B}_a\mathcal{B}_a\|_1\\&\le\frac{1}{2}\|\mathcal{H}_N+\mathcal{M}_N-\sqrt2\mathcal{B}_a\|_2\|\mathcal{H}_N+\mathcal{M}_N+\sqrt2\mathcal{B}_a\|_2+\frac{1}{2}\|\mathcal{H}_N-\mathcal{M}_N\|^2_2.
\end{split}
\end{equation}
Recall $\tau_N=1-\frac{a}{2N}$. By the identity: $L_{N-1}^a(t)=L_N^a(t)-L_N^{a-1}(t)$ for Laguerre polynomials, we get
\begin{equation*}
    \begin{split}
       M_N(x,y)&=\sqrt{ \frac{\tau_N \Gamma(N)}{8s\Gamma(N+a)}}e^{-\frac{\tau_N xy}{8sN}}\left(\frac{\tau_N xy}{4sN}\right)^\frac{a}{2}L_{N-1}^{a}\left(\frac{\tau_N xy}{4sN}\right)\\&=\sqrt{ \frac{\tau_N N^a\Gamma(N)}{8s\Gamma(N+a)}}e^{-\frac{\tau_N xy}{8sN}}\left(\frac{\tau_N xy}{s}\right)^\frac{a}{2}L_{N}^a\left(\frac{\tau_N xy}{4sN}\right)\frac{1}{(2N)^a}\\&~~-\sqrt{ \frac{\tau_N N^a\Gamma(N)}{8s\Gamma(N+a)}}e^{-\frac{\tau_N xy}{8sN}}\left(\frac{\tau_N xy}{s}\right)^\frac{a}{2}L_{N}^{a-1}\left(\frac{\tau_N xy}{4sN}\right)\frac{1}{(2N)^a}\\&:=f_{\tau_N}(x,y)-g_{\tau_N}(x,y).
    \end{split}
\end{equation*}
Further define:
\begin{equation*}
\begin{array}{ll}
       S_{\tau_N}(x,y):=H_N(x,y)+M_N(x,y)=2f_{\tau_N}(x,y)-g_{\tau_N}(x,y),\\
       \\
    D_{\tau_N}(x,y):=H_N(x,y)-M_N(x,y)=g_{\tau_N}(x,y).
\end{array}
\end{equation*}
Let us now focus on the expansion for $S$ and $D$.

First, by the Taylor expansion, it is easy to see $\sqrt{\tau_N}=1-\frac{a}{4N}-\frac{a^2}{32N^2}+O(N^{-3})$. In addition, by routine calculations,
\begin{equation*}
\begin{split}
\sqrt{\frac{N^a\Gamma(N)}{\Gamma(a+N)}}&=\sqrt{\frac{N^a}{N(N+1)...(N+a-1)}}=\sqrt{\frac{1}{(1+\frac{1}{N})...(1+\frac{a-1}{N})}}\\&=\left(1+\frac{a(a-1)}{2N}+\frac{a(a-1)(a-2)(3a-1)}{24N^2}+O(N^{-3})\right)^{-\frac{1}{2}}\\&=1-\frac{a(a-1)}{4N}-\frac{a(a-1)(15a^2-47a+16)}{24(4N)^2}+O(N^{-3}).
\end{split}
\end{equation*}
Thus, the constant term can be written as:
\begin{equation}\label{con4B}
    \sqrt{\frac{\tau_N N^a\Gamma(N)}{\Gamma(a+N)}}=1-\frac{a^2}{4N}-\frac{15a^4-86a^3+99a^2-16a}{24(4N)^2}+O\left(N^{-3}\right).
\end{equation}

Notice that $f_{\tau_N}(x,y)=\sqrt{\tau_N}f_1(\sqrt{\tau_N} x,\sqrt{\tau_N} y)$ and $g_{\tau_N}(x,y)=\sqrt{\tau_N}g_1(\sqrt{\tau_N} x,\sqrt{\tau_N} y)$, we begin our discussion with $f_1$ and $g_1$. For convenience in notations, we first set $s:=1$.
Combining the expansion in Lemma \ref{exL} and \eqref{con4B}, we have:
\begin{equation}\label{a4f}
    \begin{split}
    \sqrt{8s\tau_N}& f_1(x,y)=J_a(\sqrt {xy})-\frac{a^2}{4N}J_a(\sqrt {xy})+\frac{a+1}{4N}\sqrt {xy}J_{a-1}(\sqrt {xy})\\&-\frac{3a^2+5a+2}{96N^2}(xy)J_a(\sqrt {xy})-\frac{a^2+3a+2}{48N^2}\sqrt {xy}J_{a-1}(\sqrt {xy})-\frac{(xy)^{\frac{3}{2}}}{96N^2}J_{a-1}(\sqrt {xy})\\&-\frac{15a^4-86a^3+99a^2-16a}{24(4N)^2}J_a(\sqrt{xy})+O(N^{-3}).
    \end{split}
\end{equation}
To further find $f_{\tau_N}$, we need an approximation for the Bessel function $J_a(\sqrt{xy\tau_N})$. In fact,
define $H_a(t):=J_a(\sqrt {x(1+t)})$, performing the Taylor expansion for small $t$ gives us:
\begin{equation}\label{T4B}
\begin{split}
    H_a(t)&=H_a(0)+H'_a(0)t+\frac{1}{2}H''_a(0)t^2+O(t^3)\\
    &=J_a(\sqrt x)+\frac{t}{2}\left(\sqrt x J_{a-1}(\sqrt x)-aJ_a(\sqrt x)\right)\\&~~+\frac{t^2}{8}\left((a^2+2a)J_a(\sqrt x)-xJ_a(\sqrt x)-2\sqrt x J_{a-1}(\sqrt x)\right)+O(t^3).
    \end{split}
\end{equation}
Replace $t$ by $-\frac{a}{2N}$ in \eqref{T4B} and plug all terms back into \eqref{a4f}. We finally get:
\begin{equation}\label{ex4f}
    \begin{split}
    \sqrt{8s}f_{\tau_N}(x,y)&=J_a(\sqrt {xy})+\frac{1}{4N}\sqrt{xy}J_{a-1}(\sqrt {xy})-\frac{27a^4-110a^3+99a^2-16a}{384N^2}J_a(\sqrt {xy})\\&-\frac{3a^3+7a^2+3a-1}{48N^2}\sqrt{xy}J_{a-1}(\sqrt {xy})+\frac{a^2-1}{16N^2}(xy)J_{a-1}(\sqrt {xy})\\&-\frac{6a^2+5a+2}{96N^2}(xy) J_{a}(\sqrt {xy})-\frac{1}{96N^2}(xy)^{\frac{3}{2}}J_{a-1}(\sqrt {xy})+O(N^{-3})
    \end{split}
\end{equation}
and similarly,
\begin{equation}\label{ex4g}
    \begin{split}
    \sqrt{8s}g_{\tau_N}(x,y)&=\frac{1}{2N}\sqrt {xy} J_{a-1}(\sqrt {xy})-\frac{3a^2-4a-1}{8N^2}\sqrt {xy} J_{a-1}(\sqrt {xy})\\&~~+\frac{2a-1}{8N^2}(xy)J_a(\sqrt{xy})+O(N^{-3}).
    \end{split}
\end{equation}
Recall $B_a(x,y)=\frac{1}{2\sqrt s}J_a\left(\sqrt{\frac{xy}{s}}\right)$.
Replace $x$ and $y$ by $\frac{x}{\sqrt s}$ and $\frac{y}{\sqrt s}$ respectively in \eqref{ex4f} and \eqref{ex4g}, we go back to our original expression and hence, get:
\begin{equation*}
    H_N(x,y)+M_N(x,y)-\sqrt 2B_a(x,y)=O(N^{-2}),~~H_\tau(x,y)-M_\tau(x,y)=O(N^{-1})
\end{equation*}
for any $(x,y)\in[0,s]^2$. It follows that the promised order has been proved.

To find the bound of $h(s)$ in Theorem \ref{L hard} when $s$ small, we only need to bound the $L^2$ norm for terms with the highest order in the coefficient of $\frac{1}{N^2}$. Actually, when $s$ is small, using Proposition \ref{Integral}, we see that the required terms of $2f_{\tau_N}-g_{\tau_N}-\sqrt2 J_a$ are $\frac{1}{\sqrt s}J_a\left(\sqrt{\frac{xy}{s}}\right)$ and $\frac{\sqrt{xy}}{s}J_{a-1}\left(\frac{\sqrt{xy}}{s}\right)$.
Notice $\frac{xy}{s}$ is small for small $s$, by Proposition \ref{Integral} again, we get
\begin{equation*}
\int_0^s\int_0^s \frac{1}{s}J^2_a\left(\sqrt{\frac{xy}{s}}\right)dxdy\le\frac{1}{s}\int_0^s\int_0^s\left(\frac{xy}{s}\right)^{a}dxdy\le s^{a+1}
\end{equation*}
 and
\begin{equation*}
\int_0^s\int_0^s \left(\frac{xy}{s^2}\right)J^2_{a-1}\left(\sqrt{\frac{xy}{s}}\right)dxdy\le\frac{1}{s}\int_0^s\int_0^s\left(\frac{xy}{s}\right)^{a}dxdy\le s^{a+1}.
\end{equation*}
Hence, for small $s$, when $N$ is large enough, we claim there exists a constant $C_a$ such that
\begin{equation*}
\|\mathcal{H}_N+\mathcal{M}_N-\sqrt2\mathcal{B}_a\|_2=\|2f_\tau-g_\tau-\sqrt2J_a\|_{L^2}\le  \frac{C_as^{\frac{a+1}{2}}}{N^2}.
\end{equation*}
The same argument for $g_{\tau_N}$ gives us
\begin{equation*}
\|\mathcal{H}_N-\mathcal{M}_N\|_2=\|g_{\tau_N}\|_{L^2}\le  \frac{C'_as^{\frac{a+1}{2}}}{N}
\end{equation*}
for some constant $C_a'$. Applying \eqref{ex4f} and \eqref{ex4g} to $2f_{\tau_N}-g_{\tau_N}+\sqrt2J_a$, we similarly see the required terms of $H_N+M_N+\sqrt{2}B_a$ are again $\frac{1}{\sqrt s}J_a\left(\sqrt{\frac{xy}{s}}\right)$ and $\frac{\sqrt{xy}}{s}J_{a-1}\left(\frac{\sqrt{xy}}{s}\right)$. It follows that $\|\mathcal{H}_N+\mathcal{M}_N+\sqrt{2}\mathcal{B}_a\|_2\le s^{\frac{a+1}{2}}$ for $s$ small and $N$ large. Putting all these estimates back into \eqref{LUE in1}, we obtain the promised result.
\qed

\subsection{Soft edge}
As already mentioned, our soft edge results for the GUE, LUE, and JUE involved already available trace class estimates by different authors. Here we list those results we require.
\begin{proposition}[GUE Soft Edge Estimate \cite{johnstone2012fast}]
	\label{prop:GUE_edge_Johnstone_Ma_trace_class_estimate}
 	Let $\mathcal{K}_{G_N}^{Soft}$ and $\mathcal{K}_{Ai}$ be as in Theorem \ref{main G soft}. Then there exists an absolute constant $C$ such that
	\begin{equation*}
		\lVert \mathcal{K}_{G_N}^{Soft} - \mathcal{K}_{Ai}\rVert_1 \leq \frac{C e^{-s}}{N^{\frac{2}{3}}}.
	\end{equation*}
\end{proposition}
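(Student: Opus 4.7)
The plan is to mirror the strategy used earlier for the GUE bulk and LUE hard edge: decompose both operators into a form to which Lemma \ref{21in} can be applied, reducing the trace class estimate to a collection of Hilbert--Schmidt estimates on differences of explicit kernels, and then exploit quantitative Plancherel--Rotach asymptotics for the Hermite functions in the soft-edge transition region together with uniform exponential decay bounds in order to collect the $N^{-2/3}e^{-s}$ factor.

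First I would establish the factorizations. For the Airy operator, the standard identity
\begin{equation*}
K_{Ai}(x,y)=\int_{0}^{\infty}Ai(x+u)Ai(y+u)\,du
\end{equation*}
yields $\mathcal{K}_{Ai}=\mathcal{A}\mathcal{A}$ on $[s,\infty)$, where $\mathcal{A}$ is the integral operator with kernel $Ai(x+y)$ (translated by $s$ after the obvious change of variables). For the rescaled GUE kernel, an analogous Christoffel--Darboux/recurrence manipulation of the Hermite functions produces an integral representation
\begin{equation*}
K_{G_N}^{Soft}(x,y)=\sigma_{G_N}\int_{0}^{\infty}\!\bigl[\tilde{\psi}_{N}(x+u)\tilde{\psi}_{N-1}(y+u)+\tilde{\psi}_{N-1}(x+u)\tilde{\psi}_{N}(y+u)\bigr]\,du+R_N(x,y),
\end{equation*}
where $\tilde{\psi}_j$ denotes the Hermite function rescaled by the soft-edge parameters $(\mu_{G_N},\sigma_{G_N})$ and $R_N$ is a boundary remainder controlled by $\tilde\psi_N(x)\tilde\psi_{N-1}(y)$ at $u=0$ (which decays super-exponentially for $x,y\ge s$). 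This gives $\mathcal{K}_{G_N}^{Soft}=\mathcal{P}_N\mathcal{Q}_N+\mathcal{Q}_N\mathcal{P}_N$ up to trace-class-small terms, with $\mathcal{P}_N,\mathcal{Q}_N$ having kernels built from $\tilde{\psi}_N$ and $\tilde{\psi}_{N-1}$ respectively.

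With these decompositions I would apply Lemma \ref{21in} with $\mathcal{A}=\mathcal{P}_N$, $\mathcal{B}=\mathcal{Q}_N$, $\mathcal{C}=\mathcal{A}$, giving
\begin{equation*}
2\lVert\mathcal{K}_{G_N}^{Soft}-\mathcal{K}_{Ai}\rVert_{1}\le\lVert \mathcal{P}_N+\mathcal{Q}_N-\sqrt{2}\mathcal{A}\rVert_{2}\lVert \mathcal{P}_N+\mathcal{Q}_N+\sqrt{2}\mathcal{A}\rVert_{2}+\lVert \mathcal{P}_N-\mathcal{Q}_N\rVert_{2}^{2},
\end{equation*}
plus a trace-class remainder from $R_N$. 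Using the isometry $\lVert\cdot\rVert_2=\lVert\cdot\rVert_{L^{2}}$ from (\ref{2 to L^2}), each Hilbert--Schmidt norm becomes a two-dimensional $L^{2}$-integral of explicit Hermite/Airy differences over $[s,\infty)^{2}$, so the whole estimate reduces to proving pointwise bounds of the form
\begin{equation*}
\bigl|\tilde{\psi}_{N}(x)-Ai(x)\bigr|\le \tfrac{C\,e^{-x/2}}{N^{2/3}},\qquad \bigl|\tilde{\psi}_{N}(x)-\tilde{\psi}_{N-1}(x)\bigr|\le \tfrac{C\,e^{-x/2}}{N^{2/3}},
\end{equation*}
valid uniformly for $x\ge s$, together with the companion uniform bound $|\tilde\psi_N(x)|\le C e^{-x/2}$.

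The main obstacle is precisely this last uniform asymptotic analysis. One must control $\tilde\psi_N$ in three regimes (the oscillatory region well below the edge, the Airy transition zone, and the exponentially decaying tail) and patch them together with explicit error terms carrying both the correct rate $N^{-2/3}$ and the correct exponential gain in $x$. The cleanest route is to use the uniform Plancherel--Rotach expansions of Deift or Olver, which give an Airy-function model with explicit $O(N^{-2/3})$ error in a neighborhood of the edge, combined with an integration-by-parts argument (or the Liouville--Green estimate) to transfer exponential decay in the classical turning-point variable to the rescaled variable $x$. Once these pointwise bounds are in hand, integrating the squared errors over $[s,\infty)^{2}$ produces an integrable tail of the form $\int_{s}^{\infty}e^{-x}\,dx=e^{-s}$, which, combined with the $N^{-2/3}$ factor from the Hermite-to-Airy rate and the boundedness of the ``sum'' norm $\lVert\mathcal{P}_N+\mathcal{Q}_N+\sqrt{2}\mathcal{A}\rVert_{2}$ uniformly in $N$, yields the claimed bound $C e^{-s}/N^{2/3}$.
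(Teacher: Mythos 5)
First, some context: the paper does not actually prove this proposition --- it is imported wholesale from Johnstone and Ma \cite{johnstone2012fast}, and the paper's proof of Theorem \ref{G soft} consists only of plugging the cited trace-class estimate into Lemma \ref{M and M}. You are therefore attempting strictly more than the paper does, and your architecture (factor $\mathcal{K}_{Ai}$ as a square via $K_{Ai}(x,y)=\int_0^\infty Ai(x+u)Ai(y+u)\,du$, factor $\mathcal{K}_{G_N}^{Soft}=\mathcal{P}_N\mathcal{Q}_N+\mathcal{Q}_N\mathcal{P}_N$ from the identity $(\partial_x+\partial_y)K_{G_N}(x,y)=-\sqrt{N/2}\,\bigl[\psi_N(x)\psi_{N-1}(y)+\psi_{N-1}(x)\psi_N(y)\bigr]$, then apply Lemma \ref{21in}) is indeed the Johnstone--Ma strategy and matches the paper's treatment of the bulk and hard edge. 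A minor point: that second factorization is exact --- since $(\partial_x+\partial_y)\frac{1}{x-y}=0$ and the kernel vanishes along the diagonal at $+\infty$, there is no boundary remainder $R_N$ to carry.

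The genuine gap is in the two displayed pointwise bounds to which you reduce everything. With the paper's centering $\mu_{G_N}=\tfrac12\bigl(\sqrt{2N+1}+\sqrt{2N-1}\bigr)$, which is the \emph{average} of the turning points of $\psi_N$ and $\psi_{N-1}$, each rescaled Hermite function satisfies $\tilde\psi_N(x)\approx Ai\bigl(x-\tfrac12N^{-1/3}\bigr)$ and $\tilde\psi_{N-1}(x)\approx Ai\bigl(x+\tfrac12N^{-1/3}\bigr)$ up to $O(N^{-2/3})$ and normalization; hence the individual deviations $\tilde\psi_N(x)-Ai(x)$ and $\tilde\psi_N(x)-\tilde\psi_{N-1}(x)$ are genuinely of order $N^{-1/3}$, not $N^{-2/3}$. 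Both of your claimed bounds are therefore false, and running the triangle inequality through them would give $\lVert\mathcal{P}_N+\mathcal{Q}_N-\sqrt2\,\mathcal{C}\rVert_2=O(N^{-1/3})$ and only an overall rate $N^{-1/3}$. The whole point of the averaged centering and of the symmetric structure of Lemma \ref{21in} is that the $\Theta(N^{-1/3})$ first-order shifts of $\tilde\psi_N$ and $\tilde\psi_{N-1}$ have opposite signs and cancel in the sum: one must prove the $O(N^{-2/3}e^{-x})$ bound directly for the combination $\tilde\psi_N(x)+\tilde\psi_{N-1}(x)$ minus the appropriate multiple of $Ai(x)$ (this cancellation is the hard uniform-asymptotics lemma in \cite{johnstone2012fast}), while the difference $\tilde\psi_N-\tilde\psi_{N-1}$ needs, and has, only the rate $N^{-1/3}$ --- which suffices because $\lVert\mathcal{P}_N-\mathcal{Q}_N\rVert_2$ enters squared. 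As written, your reduction is to statements that fail at the claimed rate, so the argument does not close without being reorganized around the symmetric combination.
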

\begin{proposition}[LUE Soft Edge Estimate \cite{el2006rate}]
	\label{prop:LUE_edge_ElKaroui_trace_class_estimate}
	Let $\mathcal{K}_{L_{a,N}}^{Soft}$ and $\mathcal{K}_{Ai}$ be as in Theorem \ref{main L soft}. Assume $\frac{N+a(N)}{N}\to \gamma\in[1,\infty)$. 
	Then, $\forall s\in\mathbb{R}$, there exists an integer $n(s,\gamma)$ such that $\forall N>n(s,\gamma),$ 
	\begin{equation*}
	\lVert \mathcal{K}_{L_{a,N}}^{Soft} - \mathcal{K}_{Ai}\rVert_1 \leq \frac{C(s) e^{-s}}{N^{\frac{2}{3}}},
	\end{equation*}
	for some continuous, non-increasing function $C(s)$.
\end{proposition}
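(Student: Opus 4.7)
The plan is to follow the same three-step template already used for the GUE bulk (Theorem~\ref{G bulk}) and the LUE hard edge (Theorem~\ref{L hard}): decompose both the LUE soft-edge kernel and the Airy kernel into compositions of integral operators with simpler kernels, apply the operator Cauchy--Schwarz-type inequality in Lemma~\ref{21in} to reduce the trace class bound to a sum of Hilbert--Schmidt bounds, and then extract the desired $N^{-2/3}$ rate from a uniform asymptotic expansion of scaled Laguerre functions at the soft edge.

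First I would write the scaled LUE kernel $K_{L_{a,N}}^{Soft}$ in the Christoffel--Darboux form using weighted Laguerre functions $\varphi_{a,N}(t)=e^{-t/2}t^{a/2}L_N^a(t)$ and Laguerre recurrence identities (analogous to the proof of Proposition~\ref{LUE fac}). After converting the telescoping $\varphi_{a,N}(u)\varphi_{a,N-1}(v)-\varphi_{a,N-1}(u)\varphi_{a,N}(v)$ into an integral over a parameter $t\in(-\infty,0]$ via $\frac{d}{dt}[\varphi_{a,N}(xt)\varphi_{a,N-1}(yt)]$ and the raising/lowering relations for Laguerre functions, one obtains a decomposition of the form
\begin{equation*}
\mathcal{K}_{L_{a,N}}^{Soft} \;=\; \mathcal{A}_N\mathcal{A}_N + \mathcal{B}_N\mathcal{B}_N + (\text{cross-term operators in } N^{-1/3}),
\end{equation*}
where the kernels of $\mathcal{A}_N$ and $\mathcal{B}_N$ are scaled Laguerre functions evaluated at $t_{L_N}(x)$. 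Simultaneously one decomposes the Airy kernel via the well-known integral representation $K_{Ai}(x,y)=\int_0^\infty Ai(x+u)Ai(y+u)\,du$, giving $\mathcal{K}_{Ai}=\mathcal{A}\mathcal{A}$ for the operator $\mathcal{A}$ with kernel $Ai(x+y)$ on $[s,\infty)^2$.

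Next I would apply Lemma~\ref{21in} with $\mathcal{A}\leftrightarrow\mathcal{A}_N$, $\mathcal{B}\leftrightarrow\mathcal{B}_N$, $\mathcal{C}\leftrightarrow\mathcal{A}$, reducing the trace class estimate to Hilbert--Schmidt, i.e.\ $L^2$, bounds on $\mathcal{A}_N+\mathcal{B}_N\pm\sqrt{2}\mathcal{A}$ and $\mathcal{A}_N-\mathcal{B}_N$, plus a direct Cauchy--Schwarz treatment of the cross-term operators. The cross terms carry an explicit $N^{-1/3}$ prefactor from the scaling, and their $L^2$ norms are bounded using standard pointwise envelopes for Laguerre functions in the soft-edge regime. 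The principal $L^2$ bounds then reduce to a uniform Plancherel--Rotach-type expansion
\begin{equation*}
\varphi_{a,N}\bigl(t_{L_N}(x)\bigr) \;=\; c_N\bigl[Ai(x) + N^{-2/3}R_N(x)\bigr]
\end{equation*}
valid on $[s,\infty)$, with $R_N(x)$ admitting a Gaussian/Airy-type envelope so that $|R_N(x)|\lesssim e^{-x/2}$ for large $x$. Once this expansion is available, squaring and integrating in $x,y$ over $[s,\infty)^2$ yields an $L^2$ difference of order $N^{-2/3}$ multiplied by $\bigl(\int_s^\infty e^{-x}\,dx\bigr)^{1/2}=e^{-s/2}$, which after combining the two factors in Lemma~\ref{21in} produces the desired $e^{-s}$ factor in the final bound.

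The main obstacle is obtaining the asymptotic expansion of the weighted Laguerre function $\varphi_{a,N}$ in the soft-edge window that is simultaneously (i)~uniform in $x\in[s,\infty)$ for all $s\in\mathbb{R}$, (ii)~accurate to order $N^{-2/3}$ with an explicit remainder, and (iii)~equipped with exponentially decaying envelopes so that the $L^2$ integrations yield the $e^{-s}$ factor rather than a bound that blows up as $s\to -\infty$. This is precisely the technical heart of~\cite{el2006rate}, and it requires careful saddle-point/steepest-descent analysis of the contour integral representation of $L_N^a$ with the three distinct asymptotic regions (oscillatory bulk, transition at the edge, exponentially decaying past the edge) patched together. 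The need for the transition $n(s,\gamma)$ reflects precisely the point at which these asymptotic regimes become valid uniformly down to the cutoff $s$. Once that expansion is in place, plugging it into the decomposition and Lemma~\ref{21in} produces the trace class bound $C(s)e^{-s}N^{-2/3}$ in a straightforward manner.
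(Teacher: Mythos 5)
The paper does not actually prove this proposition: it is imported as a black box from El~Karoui \cite{el2006rate}, and the in-paper ``proof'' of Theorem~\ref{L soft} is the single line ``plug the trace class estimate into Lemma~\ref{M and M}.'' Your sketch is a reasonable reconstruction of the strategy of the cited work, and it is consistent with the template this paper uses for the GUE bulk and LUE hard edge (factor the kernels, reduce trace norm to Hilbert--Schmidt norms via Cauchy--Schwarz/Lemma~\ref{21in}, then feed in quantitative special-function asymptotics). But as a standalone argument it has a genuine gap, and you name it yourself: the entire quantitative content --- a Plancherel--Rotach/Liouville--Green expansion of $\varphi_{a,N}(t_{L_N}(x))$ at the soft edge that is accurate to order $N^{-2/3}$, uniform in $x\in[s,\infty)$ for every $s$, and equipped with exponentially decaying envelopes --- is asserted and then deferred back to \cite{el2006rate}. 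Since that expansion \emph{is} the proposition for all practical purposes, the proposal does not constitute an independent proof; it is a roadmap to the citation, which is exactly the status the proposition already has in the paper.

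Two smaller points in the roadmap also need repair. First, the decomposition you write, $\mathcal{K}_{L_{a,N}}^{Soft}=\mathcal{A}_N\mathcal{A}_N+\mathcal{B}_N\mathcal{B}_N+(\text{cross terms})$, does not match the hypothesis of Lemma~\ref{21in}, which controls $\lVert\mathcal{A}\mathcal{B}+\mathcal{B}\mathcal{A}-\mathcal{C}\mathcal{C}\rVert_1$; differentiating the Christoffel--Darboux numerator $\varphi_{a,N}\varphi_{a,N-1}-\varphi_{a,N-1}\varphi_{a,N}$ with the Laguerre raising/lowering relations naturally produces the symmetrized product $\mathcal{H}_N\mathcal{M}_N+\mathcal{M}_N\mathcal{H}_N$ (as in Proposition~\ref{LUE fac} for the hard edge), and that is the form you should carry to the soft edge so that Lemma~\ref{21in} applies against the single Airy square $\mathcal{K}_{Ai}=\mathcal{A}\mathcal{A}$. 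Second, on $[s,\infty)$ the Airy square root has kernel $Ai(x+y-s)$, not $Ai(x+y)$, since $\int_s^\infty Ai(x+z-s)Ai(y+z-s)\,dz=K_{Ai}(x,y)$. Finally, for the cross terms it is not enough that they ``carry an $N^{-1/3}$ prefactor'': their trace norms are bounded by products of two Hilbert--Schmidt norms, and you must verify that the product is $O(N^{-2/3})$ with the same $e^{-s}$-type envelope, which again requires the uniform Laguerre envelopes you have deferred.
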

\begin{proposition}[JUE Soft Edge Estimate \cite{johnstone2008multivariate}]
	\label{prop:JUE_edge_Johnstone_trace_class_estimate}
	Let $s>-\infty$ and fix $[s,\infty)$. Let $\mathcal{K}_{J_{a,b,N}}^{Soft}$ and $\mathcal{K}_{Ai}$ be as in Theorem \ref{main J soft}. Assume that $\frac{a(N)}{N}\to \alpha\in(0,\infty)$ and $\frac{b(N)}{N}\to \beta\in[0,\infty)$. Then, $\forall s\in\mathbb{R}$,
	\begin{equation*}
	\lVert \mathcal{K}_{J_{a,b,N}}^{Soft} - \mathcal{K}_{Ai}\rVert_1 \leq \frac{C(s) e^{-\frac{s}{2}}}{N^{\frac{2}{3}}},
	\end{equation*}
	for some continuous, non-increasing function $C(s)$.
\end{proposition}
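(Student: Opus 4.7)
The plan is to follow the same blueprint used above for the GUE bulk and LUE hard edge: factor both $\mathcal{K}_{J_{a,b,N}}^{Soft}$ and $\mathcal{K}_{Ai}$ into products of integral operators built from classical special functions, invoke the operator Cauchy--Schwarz inequality \eqref{CS in} together with Lemma \ref{21in} to reduce the trace class norm to Hilbert--Schmidt (i.e., $L^2$-kernel) norms, and finally control those $L^2$-norms via sharp asymptotic expansions of Jacobi polynomials at the soft edge.

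First I would rewrite $K_{J_{a,b,N}}^{Soft}(x,y)$ in Christoffel--Darboux form and then apply the ``antisymmetry plus fundamental theorem of calculus'' trick from the proof of Proposition \ref{GUE fac} (and its Laguerre analogue in Proposition \ref{LUE fac}) to express it as
\[
 K_{J_{a,b,N}}^{Soft}(x,y) = \int_0^\infty \bigl[\Phi_N(x,u)\Phi_{N-1}(y,u) + \Phi_{N-1}(x,u)\Phi_N(y,u)\bigr]\,du,
\]
where $\Phi_N, \Phi_{N-1}$ are properly weighted Jacobi functions composed with $t_{J_N}$. The particular $\tanh$ change of variables and the parameters $\mu_J, \sigma_J$ in Theorem \ref{J soft} are tuned precisely so that this factorization matches, up to $O(N^{-2/3})$ error, the classical Airy factorization $K_{Ai}(x,y) = \int_0^\infty Ai(x+u)Ai(y+u)\,du$. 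Denoting by $\mathcal{J}_N, \mathcal{J}_{N-1}$ the operators with the kernels above and by $\mathcal{A}$ the operator with Airy kernel, one has $\mathcal{K}_{J_{a,b,N}}^{Soft} = \mathcal{J}_N\mathcal{J}_{N-1} + \mathcal{J}_{N-1}\mathcal{J}_N$ and $\mathcal{K}_{Ai} = \mathcal{A}\mathcal{A}$. Applying Lemma \ref{21in} with $\mathcal{A} \mapsto \mathcal{J}_N$, $\mathcal{B}\mapsto\mathcal{J}_{N-1}$, $\mathcal{C}\mapsto\mathcal{A}$ reduces the trace class estimate to the Hilbert--Schmidt norms of $\mathcal{J}_N + \mathcal{J}_{N-1} \pm \sqrt{2}\mathcal{A}$ and of $\mathcal{J}_N - \mathcal{J}_{N-1}$, exactly in the spirit of the LUE hard edge estimate \eqref{LUE in1}.

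The second step is to establish a sharp asymptotic expansion of the weighted Jacobi polynomial under the soft-edge scaling, of the form
\[
 \widetilde{P}_N(x) := \bigl[\sigma_J\, w_{a,b}(t_{J_N}(x))\bigr]^{1/2}\,P_N^{(a(N),b(N))}(t_{J_N}(x)) = Ai(x) + \frac{R_1(x)}{N^{2/3}} + O(N^{-4/3})
\]
uniformly for $x\in[s,\infty)$, which is the Jacobi analogue of Proposition \ref{exL}. Such an expansion can be produced either by a Liouville--Green (WKB) analysis of the Jacobi differential equation or, following Johnstone \cite{johnstone2008multivariate}, by a steepest-descent analysis of an integral (Rodrigues-type) representation of $P_N^{(a,b)}$. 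Because both $Ai(x)$ and $\widetilde{P}_N(x)$ decay like $\exp(-\tfrac{2}{3}x^{3/2})$ as $x\to+\infty$, integrating the pointwise $O(N^{-2/3})$ error against Lebesgue measure over $[s,\infty)^2$ yields the advertised exponentially small prefactor $e^{-s}$.

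The main obstacle is obtaining the above Jacobi-to-Airy expansion with non-asymptotic $N^{-2/3}$ error terms that are uniform in $x\in[s,\infty)$ and simultaneously in the two parameter sequences $a(N), b(N)$. Since the soft-edge location $\mu_J$ and scale $\sigma_J$ in Theorem \ref{J soft} depend nontrivially on both sequences (through the trigonometric quantities $\phi(N),\theta(N)$), the bookkeeping of lower-order corrections is genuinely more delicate than in the LUE hard edge analysis of Section \ref{LUE he}; in particular, explicit control of the remainder as a function of $s$ (rather than merely existence of a rate) is what produces the continuous, non-increasing function $C(s)$ appearing in the statement. Once the expansion is secured, the remaining $L^2$-estimates proceed as in the derivation leading to \eqref{LUE in1} and assemble into the final bound $\frac{C(s)e^{-s}}{N^{2/3}}$.
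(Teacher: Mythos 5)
The paper does not actually prove this proposition: it is imported verbatim from Johnstone \cite{johnstone2008multivariate}, and the paper's ``proof'' of Theorem \ref{J soft} consists of plugging this external trace class estimate into Lemma \ref{M and M}. Your proposal instead sketches a from-scratch argument, which is a legitimately different route, but as written it has two genuine gaps. First, the claimed exact identity $\mathcal{K}_{J_{a,b,N}}^{Soft} = \mathcal{J}_N\mathcal{J}_{N-1} + \mathcal{J}_{N-1}\mathcal{J}_N$ does not follow from the ``antisymmetry plus fundamental theorem of calculus'' trick the way Propositions \ref{GUE fac} and \ref{LUE fac} do. Those are exact finite-interval integral representations special to the Hermite and Laguerre Christoffel--Darboux kernels under bulk/hard-edge scaling. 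At the soft edge the analogous computation gives $(\partial_x+\partial_y)K_N(x,y) = -\phi_N(x)\phi_{N-1}(y)-\phi_{N-1}(x)\phi_N(y)+\epsilon_N(x,y)$ with a nonzero correction $\epsilon_N$ (the weighted Jacobi functions do not satisfy the exact first-order system that would make it vanish), plus a boundary contribution at $+\infty$ when you integrate along the diagonal ray. Both of these extra terms must themselves be bounded in trace norm, and that is a nontrivial part of El Karoui's and Johnstone's arguments that your decomposition silently discards.

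Second, and more importantly, the entire analytic content of the result --- the uniform, non-asymptotic Liouville--Green/Airy expansion of the weighted Jacobi polynomials under the $\tanh$ soft-edge scaling, with an $O(N^{-2/3})$ remainder that is uniform in $x\in[s,\infty)$, uniform in the two parameter sequences $a(N),b(N)$, and decaying like $\exp(-cx^{3/2})$ so that the $L^2$ integrals produce the $e^{-s}$ prefactor --- is asserted rather than established. You correctly identify this as ``the main obstacle,'' but resolving it is essentially the whole of Johnstone's paper; without it the proposal is a plan, not a proof. If your intent is to reprove the proposition rather than cite it, you would need to either carry out that uniform asymptotic analysis or explicitly quote the specific Liouville--Green bounds from \cite{johnstone2008multivariate} as your input, at which point you have reproduced the paper's citation in a longer form.
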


\medbreak\noindent {\itshape Proof of Theorems \ref{G soft}, \ref{L soft}, and \ref{J soft}.}\enspace
Plug the appropriate trace class estimates above into the formula from Lemma \ref{M and M}.
\qed

\bigskip
\noindent{\bf Acknowledgements. }The results of this paper are part of both of the authors' PhD theses supervised by Mark Meckes. The authors are indebted to Professor Mark Meckes for providing important discussions and valuable feedback.

\newpage

\bibliographystyle{plain}
\bibliography{my_references}

\end{document}